\documentclass[12pt,a4paper]{article}

\usepackage[utf8]{inputenc}
\usepackage[T1]{fontenc}
\usepackage{amsmath,amssymb,amsthm}
\usepackage{graphicx}
\usepackage{geometry}
\usepackage{hyperref}
\usepackage{booktabs}

\newtheorem{theorem}{Theorem}
\newtheorem{corollary}[theorem]{Corollary}
\newtheorem{proposition}[theorem]{Proposition}
\newtheorem{definition}[theorem]{Definition}
\newtheorem{remark}[theorem]{Remark}
\newtheorem{lemma}[theorem]{Lemma}

\title{\textbf{Multiplicity-Weighted Moments Across a Harmonic Caustic:
Closed Forms with Order-Independent Breakpoints}}
\author{Tam\'as B\'odis\\
\small Óbuda University, Kandó Kálmán Faculty of Electrical Engineering\\
\small ORCID: 0009-0004-6349-712X \quad \texttt{tomi546258@gmail.com}}
\date{2026}

\begin{document}
\maketitle

\begin{abstract}
While the Jacobian of a planar harmonic mapping keeps a constant sign, the
oriented and the multiplicity-weighted surface elements agree up to that
sign, and the two natural moment hierarchies coincide. The nearest
classification results in the literature are stated under exactly this
hypothesis. This paper works past it. Once the Jacobian changes sign, the
critical circle acquires a caustic as its image, part of the plane is
covered several times with opposite orientations, and the hierarchies part
company: integrating a radial weight against the oriented surface element
$n_z$ lets overlapping sheets cancel, giving the \emph{degree-weighted}
moments $J_n$, while integrating against $|n_z|$ lets every sheet count
positively, giving the \emph{multiplicity-weighted} moments $J_n^{\pm}$.
These are the classical degree and area formulas applied to the same map,
the second weighted by the multiplicity function, or Banach indicatrix, of
the parameterisation. It is the second family, the one the caustic actually
threatens, for which a closed form is no longer automatic.

Our main result is that it has one anyway: $J_n^{\pm}$ is the total
variation of the very same explicit primitive whose endpoint difference
gives $J_n$. Wherever $n_z$ changes sign finitely often, crossing the
caustic therefore costs only finitely many extra evaluation points, and they
are the same points for every order $n$, the nonnegative polynomial weight
introducing no sign change of its own. For a monotone profile there is at
most one interior breakpoint, at the critical radius itself.

The family carrying this is the two-monomial planar harmonic mapping
$F(w)=w+iC\bar w^{\,k}$, $w=p(u)e^{iv}$, arising as the planar projection of
a Fourier-perturbed surface; the Jacobian changes sign once the critical
radius $p^{*}=(k|C|)^{-1/(k-1)}$ falls inside the range of the radial
profile $p$ --- for the normalised profile $p(u)=u$, exactly when
$|C|>1/k$. The family is canonical rather than convenient: modulo additive
constants, every analytic pair with radial Jacobian is affine-equivalent to
a two-monomial pair. On the degree-weighted side the picture is the expected
one: $J_n$ has a boundary-only closed form for every $n\ge0$, the zeroth
being the oriented $z$-flux, and the complex-weighted moments vanish unless
$(k+1)\mid(m-\ell)$ --- a selection rule that needs no resonance hypothesis.
Every formula depends only on the boundary values of $p$, never on its
interior shape and never on the height profile.

Neither weighted integral is a new object, and the caustic has been located
before in the literature on harmonic trinomials; what is new is the closed
evaluation of the multiplicity-weighted hierarchy across it.
\end{abstract}

\noindent\textbf{Keywords} Planar harmonic mapping $\cdot$ Caustic $\cdot$
Sense-reversing projection $\cdot$ Multiplicity-weighted moment $\cdot$
Banach indicatrix $\cdot$ Radial Jacobian

\medskip
\noindent\textbf{Mathematics Subject Classification (2020)}
Primary 30C55 $\cdot$ Secondary 31A05, 30E05

\tableofcontents

\section{Introduction}

\subsection{Motivation}

The object of this paper is the planar harmonic mapping
$F(w)=w+iC\bar w^{\,k}$ and the moments of the surfaces that project onto
it. As $|C|$ grows, $F$ loses local injectivity along a critical circle
whose image is a caustic, and the moments split into a degree-weighted and
a multiplicity-weighted family; the second is our subject.

That such surfaces admit closed-form moments at all is not automatic. The
vertically perturbed paraboloid
\begin{equation}
\mathbf r_{\mathrm{mot}}(u,v)=
\begin{pmatrix}
u\cos v\\ u\sin v\\ u^2+\varepsilon\sin(nv)
\end{pmatrix},
\qquad u\in[0,1],\ v\in[0,2\pi], \label{eq:1}
\end{equation}
has a non-elementary area integral for $\varepsilon\neq0$. The surfaces
studied below are perturbed in the plane instead, and it is that choice
which makes the whole moment hierarchy explicit.

\subsection{The surface class}
Let $D=[0,1]\times[0,2\pi]$, $p,f,h\in C^1([0,1])$, $k\in\mathbb Z^+$.

\begin{definition}[Radially projected Fourier-perturbed surface]
\label{def:surface}
\begin{equation}
\mathbf r(u,v)=
\begin{pmatrix}
p(u)\cos v+f(u)\sin(kv)\\
p(u)\sin v+f(u)\cos(kv)\\
h(u)
\end{pmatrix},
\qquad (u,v)\in D. \label{eq:2}
\end{equation}
If $f(u)=Cp(u)^k$ ($C\in\mathbb R$), the perturbation is called
\emph{resonant}.
\end{definition}

\begin{remark}
The original, detailed case is $p(u)=u$, $h(u)=u^2$ (resonant paraboloid,
$f=Cu^k$). The present definition generalizes this to arbitrary profiles
$p$ and $h$.
\end{remark}

\subsection{Notation}
Throughout this paper we use the following conventions to avoid ambiguity:
\begin{itemize}
\item $p(u), f(u)$: planar (radial) components of the surface.
\item $h(u)$: height profile of the surface (third coordinate).
\item $\zeta(u,v):=x(u,v)+iy(u,v)$: complex $xy$-projection (separate
notation $\zeta$ to distinguish from the spatial coordinate $z=h(u)$).
\item $F(w)=\Phi(w)+\overline{\Psi(w)}$: Duren-type planar harmonic mapping in the
complex parameter $w$.
\item $J_F=|\Phi'(w)|^2-|\Psi'(w)|^2$: Jacobian of the harmonic mapping.
\item $n_z:=(\mathbf r_u\times\mathbf r_v)\cdot e_z$: the $z$-component of
the oriented surface element vector (NOT the unit normal component; we
have $n_z\,du\,dv=\nu_z\,dS$, where $\nu$ is the unit normal and $dS$ is
the surface element).
\end{itemize}
Throughout, the height profile is denoted $h(u)$.

\subsection{What we prove}

The paper has one main claim; the rest is the setting that claim needs.

\emph{The setting.} The azimuthal dependence of the oriented projected
surface element disappears pointwise precisely when $pf'=kfp'$, hence,
wherever $p>0$, when $f=Cp^k$ (Section~\ref{sec:resonance}); we call this
condition \emph{resonance}. In the resonant case the $xy$-projection is the
planar harmonic mapping $F(w)=w+iC\bar w^{\,k}$ and the surface Jacobian
factorizes as $n_z=pp'J_F$ (Section~\ref{sec:harmonic}). That factorization
is the engine: with $n_z$ azimuth-free the $v$-integration of a radial
weight separates, and the oriented moments collapse onto the boundary. We
record the resulting hierarchy in full --- boundary-only closed forms for
every $J_n$, $n\ge0$, whose zeroth term is the flux
(Sections~\ref{sec:Jor}--\ref{sec:higher}), and a $(k+1)$-fold selection
rule for the complex-weighted moments, which turns out to need no resonance
hypothesis at all (Section~\ref{sec:complex-weighted}).

\emph{The claim.} All of that assumes nothing about the sign of $n_z$, but
it is only the whole story while the projection stays sense-preserving. The
Jacobian $J_F=1-k^2C^2p^{2k-2}$ changes sign once the critical radius
$p^{*}=(k|C|)^{-1/(k-1)}$ ($k\ge2$) falls inside the range of $p$; for the
normalised profile $p(u)=u$ this happens exactly when $|C|>1/k$. Beyond it
the critical circle acquires a caustic as its image, and the
degree-weighted
moments start to let overlapping sheets cancel. The multiplicity-weighted
moments $J_n^{\pm}$, obtained by integrating against $|n_z|$ instead, do
not cancel --- and it is for them that a closed form is no longer
automatic. Our main result, Theorem~\ref{thm:compensated}, is that they
have one anyway: $J_n^{\pm}$ is the total variation of the very same
explicit primitive whose endpoint difference gives $J_n$. Crossing the
caustic therefore contributes only finitely many extra evaluation points
--- whenever $n_z$ changes sign finitely often --- and they are the same
points for every order $n$, since the nonnegative weight $|\zeta|^{2n}$
introduces no sign change of its own. For a monotone profile there is at
most one interior breakpoint, namely $p^{*}$ itself. This is the region
that the nearest
classification results do not reach, since they carry a standing hypothesis
of constant Jacobian sign.

\emph{Why this family.} The starting point is canonical rather than
convenient. Classifying the analytic pairs $(\Phi,\Psi)$ whose harmonic
Jacobian is radial (Section~\ref{sec:classification}), we find that apart
from a degenerate stratum with $J_F\equiv0$ and line-valued image, every
such pair is affine-equivalent to a two-monomial pair; the two-monomial
family is therefore a complete set of normal forms.

\emph{What we do not claim.} The reduction of a divergence-free flux to
boundary data is elementary. Neither weighted integral is a new object:
both are instances of classical change-of-variables formulas, and the
polynomial moments of the signed winding number are known explicitly, for
example in the signature theory of planar paths. The caustic itself has
been located before, in the literature on harmonic trinomials, which
studies this same family with a discrete observable in place of our
continuous one. Section~\ref{sec:related-work} sets out all three
boundaries in detail. What we contribute is the closed evaluation of the
multiplicity-weighted hierarchy across the caustic, and the fact that its
cost is finite and independent of the moment order.

\section{The $z$-component of the normal vector and the oriented flux}
\label{sec:flux}

\begin{proposition}[General $n_z$ formula]
\label{prop:nz}
The $z$-component of the normal vector of surface~\eqref{eq:2} is:
\begin{equation}
n_z=p(u)p'(u)+\big(p(u)f'(u)-kf(u)p'(u)\big)\sin\big((k+1)v\big)-kf(u)f'(u).
\label{eq:3}
\end{equation}
\end{proposition}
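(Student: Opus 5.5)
The plan is a direct computation of $n_z=x_uy_v-x_vy_u$ from \eqref{eq:2}, since $(\mathbf r_u\times\mathbf r_v)\cdot e_z$ involves only the first two components. The partial derivatives are
\begin{align*}
x_u&=p'\cos v+f'\sin(kv), & x_v&=-p\sin v+kf\cos(kv),\\
y_u&=p'\sin v+f'\cos(kv), & y_v&=p\cos v-kf\sin(kv).
\end{align*}
I will expand the two products $x_uy_v$ and $x_vy_u$ and group the resulting terms by their coefficient type: $pp'$, $ff'$, and the mixed terms $pf'$ and $fp'$.

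The diagonal groups should collapse by $\cos^2+\sin^2=1$. The $pp'$ terms are $pp'\cos^2v$ from $x_uy_v$ and $+pp'\sin^2v$ from $-x_vy_u$, which sum to $pp'$. The $ff'$ terms are $-kff'\sin^2(kv)$ and $-kff'\cos^2(kv)$, which sum to $-kff'$.

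The mixed terms are the only place where an angle-addition identity is needed. For $pf'$ we get $pf'\sin(kv)\cos v+pf'\sin v\cos(kv)=pf'\sin((k+1)v)$. For $fp'$ we get $-kfp'\cos v\sin(kv)-kfp'\cos(kv)\sin v=-kfp'\sin((k+1)v)$. Summing the three groups gives \eqref{eq:3}.

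There is no real obstacle here. The only point needing care is the sign bookkeeping in $-x_vy_u$, and checking that both mixed groups combine into $\sin((k+1)v)$ rather than into a $\cos$ term or a $(k-1)$ frequency. As a sanity check I will set $f\equiv0$, which should recover the polar-coordinates Jacobian $pp'$.
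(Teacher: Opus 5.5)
Your proposal is correct and follows essentially the same route as the paper: you compute the four planar partial derivatives, expand $x_uy_v-x_vy_u$, collapse the $pp'$ and $ff'$ groups by $\cos^2+\sin^2=1$, and combine the mixed terms via $\sin(kv)\cos v+\cos(kv)\sin v=\sin((k+1)v)$. Your sign bookkeeping in $-x_vy_u$ checks out term by term.
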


\begin{proof}
Write $p=p(u)$, $p'=p'(u)$, $f=f(u)$, $f'=f'(u)$.

\textbf{Step 1 -- Partial derivatives:}
\begin{align}
\partial_u\mathbf r&=(p'\cos v+f'\sin(kv),\ p'\sin v+f'\cos(kv),\ h'), \label{eq:4}\\
\partial_v\mathbf r&=(-p\sin v+kf\cos(kv),\ p\cos v-kf\sin(kv),\ 0). \label{eq:5}
\end{align}

\textbf{Step 2} -- $n_z=(\partial_u r_x)(\partial_v r_y)-(\partial_u r_y)(\partial_v r_x)$, substituting:
\begin{align}
n_z={}&(p'\cos v+f'\sin(kv))(p\cos v-kf\sin(kv))\notag\\
&-(p'\sin v+f'\cos(kv))(-p\sin v+kf\cos(kv)). \label{eq:6}
\end{align}
Since $\partial_v r_z=0$, the $h'$ term drops out -- hence $n_z$ is
independent of $h(u)$.

\textbf{Step 3 -- Expansion and grouping:}
Base terms ($\cos^2v+\sin^2v=1$): $pp'$.
Mixed terms, using the addition formula
$\sin(kv)\cos v+\cos(kv)\sin v=\sin((k+1)v)$:
$(pf'-kfp')\sin((k+1)v)$.
Quadratic terms ($\sin^2(kv)+\cos^2(kv)=1$): $-kff'$.

\textbf{Step 4 -- Result:} $n_z=pp'+(pf'-kfp')\sin((k+1)v)-kff'$.
\end{proof}

The azimuthal derivative follows immediately:
\begin{equation}
\partial_v n_z=(k+1)\big(p(u)f'(u)-kf(u)p'(u)\big)\cos\big((k+1)v\big).
\label{eq:7}
\end{equation}

\begin{theorem}[General $z$-flux boundary formula]
\label{thm:general}
On surface~\eqref{eq:2}, for any $p,f,h\in C^1([0,1])$:
\begin{equation}
I_{e_z}:=\iint_D n_z\,du\,dv=\pi\Big[p(1)^2-p(0)^2-k\big(f(1)^2-f(0)^2\big)\Big].
\label{eq:8}
\end{equation}
\end{theorem}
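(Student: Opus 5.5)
The plan is to integrate the explicit formula \eqref{eq:3} of Proposition~\ref{prop:nz} directly, doing the $v$-integration first and the $u$-integration second. Fubini is available without comment: $p,f\in C^1([0,1])$, so $n_z$ is continuous on the compact rectangle $D$.

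\textbf{The $v$-integration.} For each fixed $u$, the oscillating term $(pf'-kfp')\sin((k+1)v)$ integrates to zero over $[0,2\pi]$. This holds because $k+1\ge 2$ is a positive integer, so $\int_0^{2\pi}\sin((k+1)v)\,dv=0$. Notably, this step makes no use of resonance. The remaining terms do not depend on $v$, and they give
\[
\int_0^{2\pi} n_z\,dv = 2\pi\big(p(u)p'(u)-k f(u)f'(u)\big).
\]

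\textbf{The $u$-integration.} The observation to use is that $pp'=\tfrac12 (p^2)'$ and $ff'=\tfrac12(f^2)'$. Both $p^2$ and $f^2$ are $C^1$, so the fundamental theorem of calculus yields
\[
I_{e_z}=2\pi\cdot\tfrac12\Big[p^2-kf^2\Big]_0^1=\pi\Big[p(1)^2-p(0)^2-k\big(f(1)^2-f(0)^2\big)\Big],
\]
which is exactly \eqref{eq:8}. The height profile $h$ never enters, consistent with Step~2 of the proof of Proposition~\ref{prop:nz}.

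\textbf{Main obstacle.} I do not expect a genuine obstacle. The only points needing care are the regularity that justifies exchanging the order of integration and applying the fundamental theorem of calculus, both of which are covered by the $C^1$ hypothesis. One should also note that $k\ge1$, so the frequency $k+1$ never vanishes and the oscillating term really does drop out.
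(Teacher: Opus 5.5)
Your proposal is correct and is essentially the paper's proof. In both, the $v$-integration of \eqref{eq:3} removes the $\sin((k+1)v)$ term with no resonance hypothesis, and Newton--Leibniz is then applied to $pp'-kff'=\tfrac12\big(p^2-kf^2\big)'$; your remarks on Fubini and on $k+1\neq0$ add care that the paper leaves implicit.
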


\begin{proof}
Since $\int_0^{2\pi}\sin((k+1)v)\,dv=0$, the $v$-integral gives:
\begin{equation}
\int_0^{2\pi} n_z\,dv=2\pi\big(p(u)p'(u)-kf(u)f'(u)\big). \label{eq:9}
\end{equation}
Using $pp'=\tfrac12(p^2)'$ and $ff'=\tfrac12(f^2)'$, Newton--Leibniz gives:
\begin{equation}
I_{e_z}=2\pi\Big[\tfrac12(p(1)^2-p(0)^2)-\tfrac{k}{2}(f(1)^2-f(0)^2)\Big]
=\pi\Big[p(1)^2-p(0)^2-k(f(1)^2-f(0)^2)\Big]. \label{eq:10}
\end{equation}
\end{proof}

\begin{remark}[Green's theorem and projected area]
\label{rem:green}
The $z$-flux is the oriented projected area onto the $xy$-plane:
$I_{e_z}=\iint_D(x_uy_v-y_ux_v)\,du\,dv=\tfrac12\oint_{\partial D}(x\,dy-y\,dx)$
by Green's theorem. The sides $v=0$ and $v=2\pi$ are identified by
periodicity with opposite orientations and therefore cancel. Only the
boundary curves $u=0$ and $u=1$ contribute, which explains why the flux
depends solely on the boundary values $p(0),p(1),f(0),f(1)$.
\end{remark}

\begin{remark}[The boundary terms are classical Fourier areas]
The per-boundary term $\pi\big(p^2-kf^2\big)$ in Theorem~\ref{thm:general}
is the oriented area enclosed by the corresponding boundary curve, and for a
curve given by a finite Fourier series this is classical. Hurwitz's Fourier
area formula~\cite[\S6]{Hurwitz1902} gives, for a closed rectifiable curve
with real Fourier coefficients,
$F=\pi\sum_{k\ge1}k\,(a_kb_k'-a_k'b_k)$; rewritten complexly, with
$z(u)=\sum_\ell c_\ell e^{i\ell u}$, this reads
$F=\pi\sum_\ell \ell\,|c_\ell|^2$, and for a two-term trochoid
$ae^{imu}+be^{-inu}$ it gives $\pi\big(m|a|^2-n|b|^2\big)$. Hurwitz begins
from an arbitrary closed rectifiable curve and, for the area calculation
itself, additionally assumes that the coordinate functions admit
derivatives in the direction of increasing arc length and that these are
integrable; no simplicity assumption is imposed at any stage. The formula
therefore covers the non-simple case, the oriented line integral then
computing the area counted with winding number --- that index-theoretic
reading being a modern restatement rather than Hurwitz's own language. A modern harmonic-mapping counterpart, with the normalisation
$d\sigma=dA/\pi$, is the identity
$S_f(r)=\sum_{n\ge1}n\big(|a_n|^2-|b_n|^2\big)r^{2n}$ obtained in the proof
of~\cite[Theorem 1]{ChenPonnusamyRasila2015}.
\end{remark}

\begin{remark}[$u=0$ boundary]
If $p(0)=0$ and $f(0)=0$, the $u=0$ boundary degenerates to a point. If
$p(0)\neq0$ or $f(0)\neq0$: the $u=0$ boundary is in general not a circle
but a closed parametric boundary curve: $x(0,v)=p(0)\cos v+f(0)\sin(kv)$,
$y(0,v)=p(0)\sin v+f(0)\cos(kv)$. This inner boundary curve naturally
contributes to the Green's theorem boundary formula.
\end{remark}

\section{The resonance condition}
\label{sec:resonance}

\begin{theorem}[Generalized resonance condition]
\label{thm:resonant}
On surface~\eqref{eq:2}, $n_z$ is pointwise $v$-independent if and only if:
\begin{equation}
p(u)f'(u)=k\,f(u)p'(u)\qquad\forall u\in(0,1]. \label{eq:11}
\end{equation}
If $p(u)>0$ on $(0,1)$, this is equivalent to
\begin{equation}
\left(\frac{f}{p^k}\right)'=0, \label{eq:12}
\end{equation}
from which, under appropriate regularity conditions:
\begin{equation}
f(u)=Cp(u)^k. \label{eq:13}
\end{equation}
If $p(0)=0$ or $p(1)=0$, the solution holds on the interior and extends to
the boundary by continuity. In this case:
\begin{equation}
n_z=p(u)p'(u)\Big(1-k^2C^2p(u)^{2k-2}\Big)\qquad(v\text{-independent}). \label{eq:14}
\end{equation}
\end{theorem}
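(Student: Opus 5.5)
The argument starts from Proposition~\ref{prop:nz}, which already isolates the only azimuthal dependence of $n_z$ in the single term $A(u)\sin((k+1)v)$, with $A:=pf'-kfp'$. The terms $pp'$ and $-kff'$ depend on $u$ alone. So $n_z(u,\cdot)$ is constant in $v$ exactly when $A(u)\sin((k+1)v)$ is constant in $v$. Since $\sin((k+1)v)$ is a nonconstant function on $[0,2\pi]$ (as $k+1\ge2$), this happens if and only if $A(u)=0$. Equivalently, by \eqref{eq:7}, $\partial_v n_z\equiv0$ forces $(k+1)A(u)\cos((k+1)v)\equiv0$; evaluating at $v=0$ gives $A(u)=0$. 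The converse is immediate. This yields \eqref{eq:11} pointwise for each $u$, and the statement is simply taken on $(0,1]$ as written.

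For the equivalence with \eqref{eq:12}, on any interval where $p>0$ I will compute the quotient rule for $f/p^k$, using that $p^k$ is $C^1$ there:
\[
\Bigl(\frac{f}{p^k}\Bigr)'=\frac{f'p^k-kfp^{k-1}p'}{p^{2k}}=\frac{pf'-kfp'}{p^{k+1}}.
\]
Since the denominator is positive, the derivative vanishes exactly when $A$ does. On the connected interval $(0,1)$ a $C^1$ function with zero derivative is constant by the mean value theorem, so $f=Cp^k$ there. If $p$ vanishes at an endpoint, continuity of $f$ and $p$ on $[0,1]$ extends the identity $f=Cp^k$ to the closed interval; this is the ``appropriate regularity'' clause. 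Where $p$ might vanish in the interior, I would only claim the identity on each component of $\{p>0\}$, possibly with different constants. That is presumably why the statement hedges, and I will note it rather than resolve it.

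Finally, I substitute $f=Cp^k$ and $f'=Ckp^{k-1}p'$ into \eqref{eq:3}. The middle coefficient vanishes by construction. The last term becomes
\[
-kff'=-k\cdot Cp^k\cdot Ckp^{k-1}p'=-k^2C^2p^{2k-1}p'.
\]
Factoring out $pp'$ gives \eqref{eq:14}. The only step requiring care is the regularity/connectedness point in the second paragraph. Everything else is direct algebra from Proposition~\ref{prop:nz}.
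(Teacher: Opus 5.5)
Your proposal is correct and follows the paper's proof essentially step for step: you read off $v$-independence from \eqref{eq:7} (the single term $(pf'-kfp')\sin((k+1)v)$), apply the quotient rule to $f/p^k$ where $p>0$, and substitute $f'=Ckp^{k-1}p'$ into \eqref{eq:3}. Your additions, namely the mean-value-theorem and connectedness argument for $f/p^k$ being constant and the continuity extension to the endpoints, make explicit what the paper leaves under ``appropriate regularity conditions''.
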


\begin{proof}
From \eqref{eq:7}, $\partial_v n_z=0$ for every $v$ iff $pf'-kfp'=0$. If $p(u)>0$:
\begin{equation}
\left(\frac{f}{p^k}\right)'=\frac{p^kf'-f\cdot kp^{k-1}p'}{p^{2k}}
=\frac{pf'-kfp'}{p^{k+1}}=0. \label{eq:15}
\end{equation}
Hence $f/p^k=C$ (constant), i.e.\ $f=Cp^k$. The case $C=0$ (identically
zero perturbation) is naturally included. Substituting $f'=Ckp^{k-1}p'$:
\[
n_z=pp'+(p\cdot Ckp^{k-1}p'-kCp^k\cdot p')\sin((k+1)v)-k\cdot Cp^k\cdot Ckp^{k-1}p'
\]
\[
=pp'+0-k^2C^2p^{2k-1}p'=pp'\big(1-k^2C^2p^{2k-2}\big).
\]
\end{proof}

\begin{remark}[Relation between the two theorems]
Theorem~\ref{thm:general} does not require~\eqref{eq:11}: the flux is always
closed-form. Theorem~\ref{thm:resonant} is stronger: $n_z$ loses its
$v$-dependence at the level of the integrand, pointwise -- this is not
mere integral-level orthogonality.
\end{remark}

\begin{remark}[Special cases]
The original case $p(u)=u$: $f=Cu^k$ and $uf'=kf$ (Euler ODE). The
generalized condition $pf'=kfp'$ is the Euler relation generalized to the
profile $p(u)$.
\end{remark}

\begin{table}[htbp]
\centering
\footnotesize
\begin{tabular}{@{}llll@{}}
\toprule
$p(u)$ & Surface & Resonant $f$ & $I_{e_z}$ \\
\midrule
$u$ & Paraboloid, cone, etc. & $Cu^k$ & $\pi(1-kC^2)$ \\
$\sin(\pi u)$ & Sphere ($u\in[0,1]$) & $C\sin^k(\pi u)$ & $0$ \\
$\cosh u$ & Hyperboloid & $C\cosh^k u$ & $\pi(\cosh^21-1-kC^2(\cosh^{2k}1-1))$ \\
$R+r\cos(2\pi u)$ & Torus-type & $C(R+r\cos(2\pi u))^k$ & $0$ \\
\bottomrule
\end{tabular}
\caption{Generalized resonance condition ($f=Cp^k$) and closed flux for
various base profiles. In the sphere and torus-type cases the boundary
values coincide ($p(0)=p(1)$, $f(0)=f(1)$), hence the oriented $z$-flux is
zero. For the torus: $h(u)=r\sin(2\pi u)$ and $R>r>0$.}
\end{table}

\subsection{Connection to homogeneous harmonic polynomials}
The resonance condition $pf'=kfp'$ has deep mathematical roots. Note: we
do not claim that the full three-dimensional surface is a harmonic object,
but that the planar perturbation carries the structure of polar-coordinate
homogeneous harmonic modes.

In polar coordinates, the separated solutions of the Laplace equation
$r^k\cos(k\theta)$, $r^k\sin(k\theta)$ are $k$-th order homogeneous
harmonic polynomials satisfying the Euler relation $r\,\partial_r g=kg$.
This is the natural counterpart of our generalized condition
($pf'=kfp'$, solution $f=Cp^k$): the change in the perturbation per unit
radial step in $p(u)$ is $k$ times the change in $p$.

Due to the Euler relation $pf'=kfp'$, the $v$-dependent term
$(pf'-kfp')\sin((k+1)v)$ vanishes identically -- this is the algebraic
mechanism that distinguishes the resonant case.

\section{The projection as a planar harmonic mapping}
\label{sec:harmonic}

\subsection{Classical Duren framework}

According to Duren~\cite{duren2004}, a harmonic mapping $F:\Omega\to
\mathbb C$ defined on a simply connected $\Omega\subset\mathbb C$ admits
the representation
\begin{equation}
F(w)=\Phi(w)+\overline{\Psi(w)} \label{eq:16}
\end{equation}
where $\Phi,\Psi$ are analytic. In the modern planar theory the natural
organising datum is not only the Jacobian but also the dilatation
$\omega=\Psi'/\Phi'$; see Clunie--Sheil-Small, Hengartner--Schober and
Duren's monograph~\cite{ClunieSheilSmall1984,HengartnerSchober1986,duren2004}.

\begin{theorem}[Lewy, 1936~\cite{Lewy1936}]
Such a harmonic mapping $F$ is locally univalent if and only if its
Jacobian
\begin{equation}
J_F=|\Phi'(w)|^2-|\Psi'(w)|^2 \label{eq:17}
\end{equation}
is nowhere zero. It is sense-preserving if $J_F>0$, and sense-reversing if
$J_F<0$.
\end{theorem}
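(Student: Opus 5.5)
Lewy's theorem splits into two parts of very different depth. The orientation clause, and the direction ``$J_F\neq0$ at a point $\Rightarrow$ $F$ is locally univalent there,'' are real-variable statements. The converse, ``locally univalent $\Rightarrow J_F\neq0$,'' genuinely uses harmonicity and is Lewy's contribution. I will treat the two directions separately. Throughout I use the Wirtinger calculus: with $F=\Phi+\overline{\Psi}$ we have $F_w=\Phi'$ and $F_{\bar w}=\overline{\Psi'}$. The real Jacobian of $F$, viewed as a map $\mathbb R^2\to\mathbb R^2$, is then $|F_w|^2-|F_{\bar w}|^2=|\Phi'|^2-|\Psi'|^2=J_F$, which identifies \eqref{eq:17} with the ordinary Jacobian determinant.

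\textbf{Easy direction and orientation.} If $J_F(w_0)\neq0$, the inverse function theorem applied to the $C^\infty$ map $F$ gives a neighbourhood on which $F$ is a diffeomorphism, so $F$ is locally univalent there. The sign of the Jacobian determinant is exactly the sign of the determinant of $dF$. Hence $J_F>0$ means the differential preserves orientation and $J_F<0$ means it reverses it. Nothing beyond the Wirtinger identity above is needed for the orientation clause.

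\textbf{Hard direction.} Suppose $F$ is univalent near $w_0$ but $J_F(w_0)=0$, that is $|\Phi'(w_0)|=|\Psi'(w_0)|=:a$. The plan is to contradict injectivity via a local normal form. Translate so that $w_0=0$ and $F(0)=0$.

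\emph{Case $a>0$.} Compose with a suitable rotation of the target, and use the freedom to replace $F$ by $e^{i\alpha}F$, to normalise $\Phi'(0)=\Psi'(0)=1$ (up to unimodular factors). Then $dF_0(h)=h+\bar h=2\,\mathrm{Re}\,h$, which has kernel $i\mathbb R$. Expanding $F$ to higher order along and transverse to this kernel, I would show that $F$ folds the plane: there are points on either side of the critical set that share the same image. The concrete route is to study $F$ restricted to short segments $t\mapsto x+it$, where the real part of $F$ is nearly constant. A degree argument then applies, since a univalent continuous map is a local homeomorphism and so has local degree $\pm1$.

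\emph{Case $a=0$.} Here both $\Phi'$ and $\Psi'$ vanish at $0$. Write the leading terms $\Phi=\alpha w^m+\dots$ and $\Psi=\beta w^n+\dots$ with $m,n\ge2$. The lowest-order homogeneous part of $F$, for instance $\alpha w^m+\bar\beta\bar w^m$ when $m=n$, is then a homogeneous harmonic polynomial of degree $\ge2$. Its restriction to small circles winds around $0$ either more than once or not at all (when $|\alpha|=|\beta|$ and $m=n$, degenerate folding). By Rouché-type stability of the winding number of a small circle under higher-order perturbation, the local degree of $F$ at $0$ has absolute value different from $1$, contradicting local univalence.

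\textbf{Main obstacle.} The degenerate subcases are the hard step: $a>0$, and $m=n$ with $|\alpha|=|\beta|$. There the leading term does not decide the degree, and one must go to the next order, which is exactly Lewy's key lemma. What I would try first is Lewy's own device. The set $\{J_F=0\}$ is the zero set of the real-analytic function $|\Phi'|^2-|\Psi'|^2$. I would show that a univalent harmonic map is forced to have $\Phi'\neq0$: otherwise the zeros of $\Phi'$ produce a branching by the argument principle applied to the analytic function $\Phi$. Once $\Phi'\neq0$, I would consider the dilatation $\omega=\Psi'/\Phi'$. Univalence forces $|\omega|<1$ or $|\omega|>1$ throughout; if $|\omega(0)|=1$, the maximum principle applied to the analytic $\omega$ would make $|\omega|$ cross $1$ near $0$, yielding both orientations and hence non-injectivity. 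Since the paper cites this as classical~\cite{Lewy1936,duren2004}, the proof may simply defer to Duren's exposition for this step.
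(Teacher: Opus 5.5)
The paper does not prove this theorem; it quotes it from Lewy and Duren, so the benchmark is the classical argument. Your easy direction, the orientation clause, and the winding-number argument for $\Phi'(w_0)=\Psi'(w_0)=0$ with unequal vanishing orders, or equal orders and $|\alpha|\neq|\beta|$, are correct.

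\textbf{The gap.} It sits in the case you flag yourself: equal orders with $|\alpha|=|\beta|$. There you propose to show that univalence forces $\Phi'(w_0)\neq0$ ``by the argument principle applied to $\Phi$''. This justification cannot work. $F(w)=w^{2}+\bar w$ has $J_F=4|w|^{2}-1<0$ near $0$, so it is univalent there, even though $\Phi=w^{2}$ branches at $0$. Branching of $\Phi$ transfers to $F$ only when $\Phi$ dominates $\overline{\Psi}$, which is exactly your non-degenerate winding case. In the degenerate case the two parts balance at leading order, and nothing about $\Phi$ alone decides injectivity. Deferring this step to Duren leaves the only nontrivial half of the theorem unproved.

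There are two smaller omissions. First, the ``fold'' plan for $a>0$ is never carried out; your dilatation paragraph is what actually covers $a>0$. Second, that paragraph misses the case $\omega\equiv c$ with $|c|=1$, where $|\omega|$ never crosses $1$. Then $\Psi=c\Phi+\mathrm{const}$ and $F$ maps into a line, and injectivity must be excluded separately, e.g.\ by invariance of domain.

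\textbf{Repair within your framework.} Suppose $\Phi'$ and $\Psi'$ vanish at $w_0$ to the same order with leading coefficients $m\alpha$ and $m\beta$. Then $\omega=\Psi'/\Phi'$ has a removable singularity there with $|\omega(w_0)|=|\beta/\alpha|=1$. Your open-mapping argument therefore applies verbatim. It yields points of both orientations arbitrarily close to $w_0$, away from the isolated zeros of $\Phi'$. A homeomorphism of a connected set cannot have this, since its local degree is constant.

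\textbf{The classical route.} This avoids every case distinction.
\begin{itemize}
\item Since $|\Phi'(w_0)|=|\Psi'(w_0)|$, pick $|\lambda|=1$ with $\lambda\Phi'(w_0)+\bar\lambda\Psi'(w_0)=0$.
\item Set $\phi:=\lambda\Phi+\bar\lambda\Psi$. It is analytic with $\phi'(w_0)=0$, and $\operatorname{Re}(\lambda F)=\operatorname{Re}\phi$.
\item If $\phi$ is constant, $F$ maps a neighbourhood into a line, which is impossible for an injective map.
\item Otherwise the level set $\{\operatorname{Re}\phi=\operatorname{Re}\phi(w_0)\}$ near $w_0$ consists of at least four arcs issuing from $w_0$. $F$ sends all of them into the single line $\{\operatorname{Re}(\lambda\zeta)=\operatorname{Re}\phi(w_0)\}$.
\item Two of these arcs must land on the same side of $F(w_0)$, so their images overlap, contradicting injectivity.
\end{itemize}
This handles $a>0$ and $a=0$ at once and needs no degree theory.
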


\begin{remark}
Lewy's theorem is specifically a 2D phenomenon; in higher dimensions
($\mathbb R^n$, $n\geq3$) the analogous statement is false.
\end{remark}

\subsection{What we use and what we don't}

Duren's theory covers a rich area: coefficient
estimates~\cite{ClunieSheilSmall1984}, curvature and the Schwarzian
derivative~\cite{ChuaquiDurenOsgood2004}, the Weierstrass--Enneper
representation, quasiconformal extensions. Of these,
we use only a few basic concepts: the algebraic structure $F=\Phi+\overline{\Psi}$,
the Jacobian $J_F=|\Phi'|^2-|\Psi'|^2$ as an indicator of local
orientation, and the sense-preserving/sense-reversing concept.

The other classical results, especially global univalence, the Schwarzian
derivative, coefficient estimates, and related deeper structural theorems,
fall outside the scope of this paper. We emphasize: the local univalence
concept used at the projection level does not imply global univalence of
the full 3D surface, which is a separate question.

\subsection{The complex projection}

Take the $xy$-projection of the surface and write it in complex form:
\begin{equation}
\zeta(u,v):=x(u,v)+iy(u,v)=p(u)e^{iv}+iCp(u)^ke^{-ikv}
\qquad\text{(in the resonant case, } f=Cp^k\text{).} \label{eq:18}
\end{equation}
Introducing the complex parameter
\begin{equation}
w:=p(u)e^{iv}, \label{eq:19}
\end{equation}
we obtain
\begin{equation}
F(w):=\zeta(w)=w+iC\bar w^k. \label{eq:20}
\end{equation}

\begin{proposition}[Projection as harmonic mapping]
\label{prop:harmonic}
The $xy$-projection of the resonant surface~\eqref{eq:2} formally agrees with the
Duren harmonic mapping form~\eqref{eq:16}:
\begin{equation}
\Phi(w)=w\ \text{(analytic part)},\qquad
\Psi(w)=-iCw^k\ \text{(so }\overline{\Psi(w)}=iC\bar w^k\text{).} \label{eq:21}
\end{equation}
\end{proposition}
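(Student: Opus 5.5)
The plan is direct substitution followed by matching the result against the Duren form \eqref{eq:16}. Start from the resonant projection \eqref{eq:18}, $\zeta(u,v)=p(u)e^{iv}+iCp(u)^ke^{-ikv}$, obtained from Definition~\ref{def:surface} with $f=Cp^k$ as in Theorem~\ref{thm:resonant}. First check the complex form itself. The real part is $p\cos v+Cp^k\sin(kv)$, which matches $x(u,v)$ in \eqref{eq:2}, because $\operatorname{Re}(iCp^ke^{-ikv})=Cp^k\sin(kv)$. The imaginary part is $p\sin v+Cp^k\cos(kv)$, which matches $y(u,v)$, because $\operatorname{Im}(iCp^ke^{-ikv})=Cp^k\cos(kv)$.

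Next, with $w=p(u)e^{iv}$ as in \eqref{eq:19}, we have $\bar w^{\,k}=p^ke^{-ikv}$, since $p$ and $C$ are real. Hence $\zeta=w+iC\bar w^{\,k}$, which is \eqref{eq:20}.

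Then identify the pieces. Put $\Phi(w)=w$ and $\Psi(w)=-iCw^k$. Both are polynomials in $w$, hence entire and analytic on any simply connected domain containing the image of $(u,v)\mapsto w$. For the antiholomorphic part, $\overline{\Psi(w)}=\overline{-iC}\,\bar w^{\,k}=iC\bar w^{\,k}$, because $C\in\mathbb R$. Therefore $F=\Phi+\overline{\Psi}$ has exactly the form \eqref{eq:16}.

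The only point that needs care is the word ``formally''. The map $(u,v)\mapsto w$ need not be injective: $p$ may fail to be monotone, and $p(0)=0$ collapses a boundary circle to a point. So the identity should be read as a composition, $\zeta=F\circ w$, where $F$ is a genuine harmonic mapping on $\mathbb C$ (or on a disc or annulus containing the range of $p$). It is not a claim that $\zeta$ is a harmonic function in $(u,v)$. I would state the conclusion in that form. As a consistency check, $J_F=|\Phi'|^2-|\Psi'|^2=1-k^2C^2|w|^{2k-2}$, so $pp'J_F$ reproduces \eqref{eq:14}; this foreshadows the factorization $n_z=pp'J_F$ but is not needed for the proposition.
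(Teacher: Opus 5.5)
Your proposal is correct and follows the paper's route. The paper likewise writes the resonant projection in the complex form \eqref{eq:18}, substitutes $w=p(u)e^{iv}$ to obtain \eqref{eq:20}, and reads off $\Phi$ and $\Psi$; your caveat that $\zeta=F\circ w$ is only a formal composition when $u\mapsto w$ is non-injective matches the paper's subsequent remark on the local nature of the connection, and your explicit checks of the real/imaginary parts and of $\overline{\Psi(w)}$ via $C\in\mathbb R$ simply fill in steps the paper leaves implicit.
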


\begin{remark}[Local nature of the connection]
The connection can be interpreted globally as a simple Duren mapping only
if $p$ is suitably injective on the interval considered and $p(u)>0$. For
general $p$ (e.g.\ $p(u)=\sin(\pi u)$, which is not injective), the same
$w$ value may correspond to multiple values of $u$. In this case,~\eqref{eq:20}
should be understood primarily locally, or at the level of the
$xy$-projection. The full 3D surface may carry additional layering through
the height profile $h(u)$.
\end{remark}

\subsection{The Jacobian factorization}

The shear-construction viewpoint underlying many geometric examples of
planar harmonic mappings goes back to~\cite{ClunieSheilSmall1984} and is
developed further, for instance, in Greiner's study of harmonic
shears~\cite{Greiner2004}; the monomial co-analytic part appearing here is
closely related to the monomial and Blaschke dilatation families studied
in~\cite{Laugesen1997}.

The factorization below is stated for an arbitrary analytic pair; monomiality
plays no role in it, and this generality is what Section~\ref{sec:classification}
requires.

\begin{lemma}[Harmonic Jacobian factorization]
\label{lem:factorization}
Let $\Phi,\Psi$ be analytic on a neighbourhood of $\{w:|w|\le\max p\}$,
with convergent expansions $\Phi(w)=\sum_{j\ge0}a_jw^{j}$ and
$\Psi(w)=\sum_{\ell\ge0}b_\ell w^{\ell}$; let $w(u,v):=p(u)e^{iv}$ for
$p\in C^{1}([0,1])$, $p\ge0$, and set
$\zeta:=\Phi(w)+\overline{\Psi(w)}=:x+iy$. Then, for every analytic pair
$\Phi,\Psi$ --- not merely monomials ---
\begin{equation}
n_z=x_uy_v-y_ux_v=p(u)p'(u)\,J_F\big(w(u,v)\big),
\qquad J_F(w):=|\Phi'(w)|^{2}-|\Psi'(w)|^{2}. \label{eq:22}
\end{equation}
\end{lemma}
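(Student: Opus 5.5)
The identity is a chain rule: the Jacobian of a composition is the product of the Jacobians. The map $(u,v)\mapsto\zeta$ factors as $(u,v)\mapsto w=p(u)e^{iv}$ followed by $w\mapsto F(w)=\Phi(w)+\overline{\Psi(w)}$. So $n_z=x_uy_v-y_ux_v$ is the real Jacobian determinant of the composite. It therefore equals
\[
\det\frac{\partial(x,y)}{\partial(\operatorname{Re}w,\operatorname{Im}w)}\cdot\det\frac{\partial(\operatorname{Re}w,\operatorname{Im}w)}{\partial(u,v)}.
\]
I would prove the lemma by computing these two factors separately. The power-series expansions in the hypothesis serve only to guarantee that $\Phi,\Psi$ are holomorphic, and hence $C^1$, on the disc $|w|\le\max p$. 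They play no other role, which is why monomiality is irrelevant.

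\textbf{Step 1 (inner factor).} For $w=p(u)e^{iv}$, i.e.\ $\operatorname{Re}w=p\cos v$ and $\operatorname{Im}w=p\sin v$, the determinant is
\[
(p'\cos v)(p\cos v)-(p'\sin v)(-p\sin v)=pp'.
\]
This is just the polar-coordinates area factor with $r=p(u)$. The same number can be obtained in complex notation: with $w_u=p'e^{iv}$ and $w_v=ipe^{iv}$, it equals $\operatorname{Im}(\overline{w_u}\,w_v)=pp'$.

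\textbf{Step 2 (outer factor).} For a $C^1$ map $F$ written in Wirtinger derivatives, the real Jacobian determinant is $|F_w|^2-|F_{\bar w}|^2$. Here $F_w=\Phi'(w)$, because $\overline{\Psi}$ is antiholomorphic. Likewise $F_{\bar w}=\overline{\Psi'(w)}$, because $\Phi$ is holomorphic. Hence the determinant is $|\Phi'|^2-|\Psi'|^2=J_F$.

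To keep the argument self-contained I would derive the Wirtinger identity directly rather than cite it. Write $F_u=F_w w_u+F_{\bar w}\overline{w_u}$ and $F_v=F_w w_v+F_{\bar w}\overline{w_v}$, and use $x_uy_v-y_ux_v=\operatorname{Im}(\overline{F_u}F_v)$. Expanding, the cross terms combine into
\[
\operatorname{Im}\bigl(\overline{F_w}F_{\bar w}\,\overline{w_u}\,\overline{w_v}+\overline{F_{\bar w}}F_w\,w_uw_v\bigr),
\]
which is the imaginary part of a number plus its conjugate and so vanishes. The remaining terms give $|F_w|^2\operatorname{Im}(\overline{w_u}w_v)+|F_{\bar w}|^2\operatorname{Im}(w_u\overline{w_v})$. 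Since $\operatorname{Im}(w_u\overline{w_v})=-\operatorname{Im}(\overline{w_u}w_v)$, this equals $(|F_w|^2-|F_{\bar w}|^2)\operatorname{Im}(\overline{w_u}w_v)$. Step 1 then gives $pp'J_F$ directly, which settles both factors at once.

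\textbf{Main obstacle.} The only delicate points are sign and orientation bookkeeping, plus the regularity at points where $p=0$. The orientation convention $n_z=x_uy_v-y_ux_v$ matches $\operatorname{Im}(\overline{\zeta_u}\zeta_v)$, so the sign is consistent. At points with $p(u)=0$ the composition is still $C^1$ in $(u,v)$, since $w$ is $C^1$ and $F$ is smooth near $0$, so the chain rule holds there too and both sides vanish. No injectivity of $p$ or of $F$ is needed, because the identity is pointwise.

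As a sanity check, the monomial case $\Phi=w$, $\Psi=-iCw^k$ gives $J_F=1-k^2C^2p^{2k-2}$, and the lemma then reproduces \eqref{eq:14} of Theorem~\ref{thm:resonant}.
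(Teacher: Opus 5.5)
Your proof is correct and follows the same route as the paper's. The paper also expands $\operatorname{Im}(\overline{\zeta_u}\zeta_v)$ with the Wirtinger chain rule, notes that the $\Phi$--$\Psi$ cross terms are complex conjugates and drop out, and uses $\operatorname{Im}(w_u\overline{w_v})=-\operatorname{Im}(\overline{w_u}w_v)=-pp'$. Your composition-of-Jacobians framing and your regularity remark at $p=0$ are sound but add nothing essential to that computation.
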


\begin{proof}
By the Wirtinger chain rule for $\zeta=\Phi(w)+\overline{\Psi(w)}$,
$\zeta_u=\Phi'(w)w_u+\overline{\Psi'(w)}\,\overline{w_u}$ and
$\zeta_v=\Phi'(w)w_v+\overline{\Psi'(w)}\,\overline{w_v}$. Since
$n_z=\operatorname{Im}(\overline{\zeta_u}\zeta_v)$,
\[
\overline{\zeta_u}\zeta_v=|\Phi'(w)|^{2}\,\overline{w_u}w_v
+|\Psi'(w)|^{2}\,w_u\overline{w_v}
+2\operatorname{Re}\big[\Phi'(w)\Psi'(w)w_uw_v\big],
\]
the two mixed terms being complex conjugates of one another. Their sum is
therefore real and drops out of the imaginary part: there is no
$\Phi$--$\Psi$ cross term in $n_z$.
Using $\operatorname{Im}(w_u\overline{w_v})=-\operatorname{Im}(\overline{w_u}w_v)$
gives $n_z=J_F(w)\operatorname{Im}(\overline{w_u}w_v)$, and for
$w=pe^{iv}$ we have $w_u=p'e^{iv}$, $w_v=ipe^{iv}$, so
$\overline{w_u}w_v=ipp'$ and $\operatorname{Im}(\overline{w_u}w_v)=pp'$.
\end{proof}

\begin{remark}[Scope]
\label{rem:lemma-scope}
The two-monomial case $\Phi(w)=w^{m}$, $\Psi(w)=-iCw^{n}$ classified in
Section~\ref{sec:classification} below is a special case of
Lemma~\ref{lem:factorization}; the factorization never used monomiality.
We verified~\eqref{eq:22} numerically for a genuinely non-monomial pair
($\Phi(w)=w+w^{2}$, $\Psi\equiv0$), where it holds exactly even though
$n_z$ is there --- correctly --- not $v$-independent.
\end{remark}
\begin{theorem}[$z$-component as Jacobian product]
\label{thm:jacobian}
For the resonant surface~\eqref{eq:2}:
\begin{equation}
n_z=p(u)p'(u)\cdot J_F(w), \label{eq:23}
\end{equation}
where
\begin{equation}
J_F(w)=|\Phi'(w)|^2-|\Psi'(w)|^2=1-k^2C^2|w|^{2k-2}=1-k^2C^2p(u)^{2k-2}.
\label{eq:24}
\end{equation}
\end{theorem}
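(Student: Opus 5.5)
The plan is to obtain the statement as a direct specialisation of Lemma~\ref{lem:factorization}, and then to cross-check the result against Theorem~\ref{thm:resonant}. By Proposition~\ref{prop:harmonic}, the resonant projection~\eqref{eq:18} is exactly $\zeta=\Phi(w)+\overline{\Psi(w)}$ with $\Phi(w)=w$ and $\Psi(w)=-iCw^k$, evaluated at $w=p(u)e^{iv}$.

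\textbf{Step 1 (hypotheses of the lemma).} Both functions are polynomials, hence entire. The analyticity hypothesis on a neighbourhood of $\{|w|\le\max p\}$ therefore holds automatically. The only other requirements are $p\in C^1([0,1])$ and $p\ge0$, which are the standing assumptions on the profile; if $p$ changes sign, I would note that $w=pe^{iv}$ is still well defined, and the lemma's computation never used $p\ge0$.

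\textbf{Step 2 (apply the lemma).} Lemma~\ref{lem:factorization} then gives~\eqref{eq:23} immediately, with $J_F(w)=|\Phi'(w)|^2-|\Psi'(w)|^2$.

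\textbf{Step 3 (evaluate the Jacobian).} We have $\Phi'(w)=1$ and $\Psi'(w)=-iCkw^{k-1}$, so $|\Psi'(w)|^2=k^2C^2|w|^{2k-2}$ since $C$ is real. Because $|w|=|p(u)|=p(u)$, this yields all three expressions in~\eqref{eq:24}.

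\textbf{Step 4 (consistency check).} Multiplying out gives $pp'(1-k^2C^2p^{2k-2})$, which is precisely~\eqref{eq:14} from Theorem~\ref{thm:resonant}. This gives an independent confirmation via Proposition~\ref{prop:nz}.

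The only point needing care is the sign and orientation bookkeeping: the convention $n_z=\operatorname{Im}(\overline{\zeta_u}\zeta_v)=x_uy_v-y_ux_v$ must match the cross-product definition used in Proposition~\ref{prop:nz}. The agreement with~\eqref{eq:14} settles this, since both computations produce the same leading term $+pp'$. For $k=1$ the exponent $2k-2=0$, so $J_F=1-C^2$ is constant, and the formula still holds, with the convention $p^0=1$ (also at $p=0$).
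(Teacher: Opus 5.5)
Your argument is correct, but it runs in the opposite direction from the paper's.

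\textbf{The paper's route.} It starts from the resonant formula \eqref{eq:14} of Theorem~\ref{thm:resonant}, obtained by substituting $f'=Ckp^{k-1}p'$ into the trigonometric expansion of Proposition~\ref{prop:nz}. It then computes $\Phi'=1$ and $|\Psi'|^2=k^2C^2|w|^{2k-2}$, recognises the bracket $1-k^2C^2p^{2k-2}$ as $|\Phi'|^2-|\Psi'|^2$, and only afterwards interprets $pp'$ as the polar Jacobian.

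\textbf{Your route.} You derive the product structure from Lemma~\ref{lem:factorization}, namely the chain rule together with the absence of a $\Phi$--$\Psi$ cross term in $\operatorname{Im}(\overline{\zeta_u}\zeta_v)$. You keep \eqref{eq:14} only as a check.

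\textbf{What each buys.} Given what precedes the theorem, the paper's route is shorter, but its factorization is an a posteriori identification of an explicit expression. Yours shows why the factorization must hold. The product form is generic for any analytic pair composed with $w=pe^{iv}$. Resonance is needed only to put the projection into that form (Proposition~\ref{prop:harmonic}). $v$-independence is then a property of this particular $J_F$. This is the viewpoint the paper itself adopts in Section~\ref{sec:classification} (cf.\ Remark~\ref{rem:lemma-scope}).

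\textbf{Two minor remarks.} First, the lemma and Proposition~\ref{prop:nz} use the same expression $x_uy_v-y_ux_v$ by definition. Your Step~4 therefore confirms the orientation convention rather than being needed to fix it. Second, the equality $|w|=p(u)$ in Step~3 uses $p\ge0$. Even if $p$ were allowed to change sign, the conclusion would survive, since $|p|^{2k-2}=p^{2k-2}$ for the even exponent $2k-2$.
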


\begin{proof}
By Theorem~\ref{thm:resonant}, in the resonant case
$n_z=pp'(1-k^2C^2p^{2k-2})$. The bracket equals $J_F$ upon substituting
$\Phi'(w)=1$ and $|\Psi'(w)|^2=k^2C^2|w|^{2k-2}$. The factor $p(u)p'(u)$ is
the Jacobian of the polar parameterization $(u,v)\mapsto w=p(u)e^{iv}$.
\end{proof}

\begin{remark}[Two Jacobians in the factorization]
The factor $p(u)p'(u)$ is the Jacobian of the polar parameterization
$(u,v)\mapsto w=p(u)e^{iv}$ (a generalization of the standard
polar-Cartesian Jacobian, reducing to the familiar value $u$ when
$p(u)=u$), while $J_F=|\Phi'|^2-|\Psi'|^2$ is the Jacobian of the harmonic
mapping $w\mapsto F(w)$. The total projection Jacobian thus factors as:
\[
n_z=\underbrace{p(u)p'(u)}_{\text{polar param.}}\cdot\underbrace{J_F(w)}_{\text{harmonic map.}}.
\]
\end{remark}
It is important to clearly distinguish what we transfer from classical
Duren theory to the surface context, and what we do not.

\textbf{Naturally transferable} (at the projection level): description of
the $xy$-projection as a harmonic mapping; the Jacobian
$J_F=|\Phi'|^2-|\Psi'|^2$; the sense-preserving/sense-reversing concept on
the $xy$-projection; the question of local univalence on the
$xy$-projection.

\textbf{Not automatically transferable:} global univalence of the full 3D
surface; curvature properties (Gaussian, mean); minimal surface
properties; the full structure of the Weierstrass--Enneper representation.

The aim of this paper is to exploit the first group precisely in order to
derive new closed moment formulas.

\section{Degree versus multiplicity}
\label{sec:degree-multiplicity}

Two weightings of the same map are used throughout this paper, and the
distinction between them carries the main result of
Section~\ref{sec:compensated}. Integrating a weight against the oriented
surface element $n_z$ lets sheets of opposite orientation cancel;
integrating against $|n_z|$ makes every sheet count positively. While the
projection is sense-preserving the two agree, and past the critical circle
of Section~\ref{sec:Jor} they do not. Both are classical objects, and it
is worth naming them as such before any moment is computed.

Both weightings
are instances of the classical change-of-variables
Write $\mathbf{P}(u,v):=\zeta(u,v)$ for the planar parameterisation, so that
$n_z$ is its Jacobian. Writing $D^{\circ}=(0,1)\times(0,2\pi)$, so that the topological degree is
defined off the image of $\partial D$, we have for continuous $\phi$
\begin{align*}
\iint_{D^{\circ}}\phi(\zeta)\,n_z\,du\,dv
  &=\int_{\mathbb R^2}\phi(y)\,\deg(\mathbf P,D^{\circ},y)\,dy,\\
\iint_{D}\phi(\zeta)\,|n_z|\,du\,dv
  &=\int_{\mathbb R^2}\phi(y)\,N(\mathbf P,D,y)\,dy,
\end{align*}
the first being the degree formula, see~\cite[\S2.1,
eq.~(2.1)]{PankkaRajala2011}, and the second Federer's area
formula~\cite[Thm.~3.2.3]{Federer1969}, in which $N(\mathbf P,D,y)=
\#\{(u,v)\in D:\zeta(u,v)=y\}$ is the multiplicity function, classically
the \emph{Banach indicatrix}~\cite{Hajlasz1993}. Passing between $D$ and
$D^{\circ}$ changes neither integral, the boundary being Lebesgue-null. Accordingly we call $J_n$ the
\emph{degree-weighted} and $J_n^{\pm}$ the \emph{multiplicity-weighted}
moment; these are descriptive names for the two classical weightings, not
established terms of art. Two cautions. The multiplicity counted here is
that of the parameterisation $\mathbf P$ on $D$, not of the harmonic
mapping $F$ on a disk: for a non-injective profile $p$ the $u$-direction
contributes coverings of its own. And $\deg$ is defined off the image of
$\partial D$, which is where the boundary curves of
Remark~\ref{rem:green} live.

\begin{remark}[Well-posedness on non-simple curves]
The weighted, winding-number-counted integral used here is a standard
object, and its well-posedness on non-simple curves is not at issue: for a
continuous closed curve of finite $p$-variation with $1\le p<2$ --- which
our smooth boundary curves are --- the polynomial moments of the winding
number function appear as coordinates of the curve's signature,
see~\cite[Lemma 20]{BoedihardjoNiQian2014}. No simplicity hypothesis enters
there. What we contribute is not the object but its closed evaluation on the
present family.
\end{remark}
\section{The degree-weighted quadratic moment}
\label{sec:Jor}

\subsection{What is $J_{\mathrm{or}}$? Three interpretations}

From the point of view of complex-analytic moment theory, formulas of
boundary type are natural: for classical harmonic moments of planar domains
they arise through boundary representations and related string-equation
identities; compare~\cite{GustafssonShapiro2005,GustafssonTkachev2009,%
Gustafsson2018}. What is specific here is that the quantity is not a standard
domain moment but an oriented flux moment of the projected surface element,
so the resulting closed formula lives in a different geometric category from
the usual quadrature-domain moment identities.
Before proceeding, it is important to clarify the meaning of
$J_{\mathrm{or}}$:

\textbf{(1) Surface flux:} $J_{\mathrm{or}}$ is the $z$-direction surface
flux of the vector field $\mathbf F(x,y,z)=(0,0,x^2+y^2)$ through the
parameterized surface. This interpretation is always valid. Flux readings of
this kind are what make the analytic/anti-analytic splitting of a harmonic
mapping carry physical content; for a different and far-reaching instance,
harmonic maps characterise a class of two-dimensional ideal fluid
flows~\cite{AlemanConstantin2012}.

\textbf{(2) Relation to classical moment of inertia:} $J_{\mathrm{or}}$ is
not identical to the classical positive physical moment of inertia
$I_z=\rho\iiint_V(x^2+y^2)\,dV$. Identification with the classical
quantity requires further geometric conditions, with the volume domain,
orientation and projection multiplicity all unambiguously fixed.

\textbf{(3) Stokes' theorem interpretation:} Concretely, choose the vector
potential
\begin{equation}
\mathbf A(x,y,z)=\Big(-x^2y-\frac{y^3}{3},\,0,\,0\Big). \label{eq:30}
\end{equation}
Then
\begin{equation}
(\nabla\times\mathbf A)_z=\frac{\partial A_y}{\partial x}-\frac{\partial A_x}{\partial y}
=0-(-x^2-y^2)=x^2+y^2, \label{eq:31}
\end{equation}
and the other components vanish. Since $n_z\,du\,dv=\nu_z\,dS$, on a
suitable local domain Stokes' theorem yields:
\begin{equation}
J_{\mathrm{or}}=\iint_D(x^2+y^2)n_z\,du\,dv=\iint_S(x^2+y^2)\nu_z\,dS
=\oint_{\partial S}\mathbf A\cdot d\mathbf r. \label{eq:32}
\end{equation}
This shows directly that the parametric definition and the surface
integral give the same quantity under appropriate conditions, and provides
a geometric explanation for why such oriented fluxes can be expressed as
boundary integrals.

\begin{theorem}[Closed formula for the quadratic moment]
\label{thm:main}
For the resonant surface~\eqref{eq:2} ($f=Cp^k$, $p\in C^1([0,1])$,
$k\in\mathbb Z^+$), the degree-weighted quadratic moment
\begin{equation}
J_{\mathrm{or}}=\iint_D(x^2+y^2)\,n_z\,du\,dv \label{eq:33}
\end{equation}
admits the closed form:
\begin{equation}
J_{\mathrm{or}}=\frac\pi2\big[p(1)^4-p(0)^4\big]
-\pi(k-1)C^2\big[p(1)^{2k+2}-p(0)^{2k+2}\big]
-\frac{\pi k}2C^4\big[p(1)^{4k}-p(0)^{4k}\big]. \label{eq:34}
\end{equation}
The formula is purely boundary-dependent: it is independent of the
interior shape of $p$ and of the height profile $h(u)$.
\end{theorem}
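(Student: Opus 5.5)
We use the two structural facts already established for the resonant case. By Theorem~\ref{thm:resonant} (equivalently Theorem~\ref{thm:jacobian}), $n_z=p(u)p'(u)\big(1-k^2C^2p(u)^{2k-2}\big)$ does not depend on $v$. So the $v$-integration acts only on the weight $x^2+y^2=|\zeta|^2$. The plan is therefore:
\begin{enumerate}
\item Compute the azimuthal average of $|\zeta|^2$.
\item Multiply it by $n_z$ and write the product as an exact $u$-derivative of a function of $p$.
\item Apply Newton--Leibniz.
\end{enumerate}
Since $h$ never enters $\zeta$, independence of the height profile is automatic.

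\textbf{Step 1: the weight.} With $w=pe^{iv}$ and $\zeta=w+iC\bar w^{\,k}$ from \eqref{eq:18}--\eqref{eq:20}, expand
\[
|\zeta|^2=|w|^2+C^2|w|^{2k}+2\operatorname{Re}\big(w\,\overline{iC\bar w^{\,k}}\big).
\]
The cross term is $2\operatorname{Re}(-iC\,w^{k+1})=2Cp^{k+1}\sin((k+1)v)$. Hence
\[
x^2+y^2=p^2+C^2p^{2k}+2Cp^{k+1}\sin\big((k+1)v\big).
\]
Because $k+1\ge 2$, the oscillating term integrates to zero over $[0,2\pi]$. Since $n_z$ is $v$-free, Fubini gives
\[
\int_0^{2\pi}(x^2+y^2)\,n_z\,dv=2\pi\big(p^2+C^2p^{2k}\big)\,pp'\big(1-k^2C^2p^{2k-2}\big).
\]

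\textbf{Step 2: exact derivative.} Substitute $s=p(u)^2$, so that $ds=2pp'\,du$. The integrand becomes
\[
\pi\big(s+C^2s^k\big)\big(1-k^2C^2s^{k-1}\big)\,ds
=\pi\big[s+(1-k^2)C^2s^k-k^2C^4s^{2k-1}\big]\,ds .
\]
A primitive in $s$ is
\[
G(s)=\tfrac12 s^2+\tfrac{1-k^2}{k+1}C^2s^{k+1}-\tfrac{k^2}{2k}C^4s^{2k}
=\tfrac12 s^2-(k-1)C^2s^{k+1}-\tfrac k2 C^4 s^{2k}.
\]
Thus the $u$-integrand equals $\pi\,\tfrac{d}{du}G(p(u)^2)$. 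This is a pure chain-rule identity, valid for any $p\in C^1$. It needs neither monotonicity nor positivity of $p'$, nor any sign condition on $J_F$.

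\textbf{Step 3: boundary evaluation.} Newton--Leibniz gives
\[
J_{\mathrm{or}}=\pi\big[G(p(1)^2)-G(p(0)^2)\big],
\]
which is exactly \eqref{eq:34}. This depends only on $p(0)$ and $p(1)$, which establishes the boundary-only claim.

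\textbf{Main obstacle and checks.} The only delicate step is Step 1: getting the sign and the frequency of the cross term right, and confirming that it has no zero-frequency component. This holds because the frequency is $k+1\neq0$ for $k\in\mathbb Z^+$, so the result also covers $k=1$. The coefficient simplification $(1-k^2)/(k+1)=-(k-1)$ is the other place where care is needed. As a consistency check, take $k=1$ and $p(u)=u$. Then $\zeta=w+iC\bar w$, so $|\zeta|^2=u^2\big(1+C^2+2C\sin 2v\big)$ and $n_z=u(1-C^2)$. Direct integration gives $\tfrac{\pi}{2}(1-C^4)$, matching \eqref{eq:34}.
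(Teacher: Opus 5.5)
Your proposal is correct and follows the paper's proof essentially step by step: expand $x^2+y^2$, eliminate the $\sin((k+1)v)$ term by integrating over $v$, multiply by the $v$-independent $n_z$, recognise an exact $u$-derivative, and apply Newton--Leibniz. Substituting $s=p^2$ instead of using the paper's termwise identity $p^mp'=\tfrac1{m+1}(p^{m+1})'$, and expanding $|\zeta|^2$ in complex rather than trigonometric form, are only cosmetic differences.
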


\begin{proof}
\textbf{Step 1 -- Clean form of $x^2+y^2$.} Using $f=Cp^k$ and the addition
formula:
\begin{align}
x^2+y^2&=(p\cos v+Cp^k\sin kv)^2+(p\sin v+Cp^k\cos kv)^2\notag\\
&=p^2+2Cp^{k+1}\big(\sin(kv)\cos v+\cos(kv)\sin v\big)+C^2p^{2k}\notag\\
&=p^2+2Cp^{k+1}\sin\big((k+1)v\big)+C^2p^{2k}. \notag
\end{align}

\textbf{Step 2 -- $v$-average.} Since $\int_0^{2\pi}\sin((k+1)v)\,dv=0$:
\begin{equation}
\frac1{2\pi}\int_0^{2\pi}(x^2+y^2)\,dv=p^2+C^2p^{2k}. \label{eq:35}
\end{equation}

\textbf{Step 3 -- The $v$-integrand.} Using~\eqref{eq:23} and the fact that $n_z$
is $v$-independent in the resonant case:
\begin{align}
\int_0^{2\pi}(x^2+y^2)n_z\,dv
&=2\pi(p^2+C^2p^{2k})\cdot pp'(1-k^2C^2p^{2k-2})\notag\\
&=2\pi\Big[p^3p'+(1-k^2)C^2p^{2k+1}p'-k^2C^4p^{4k-1}p'\Big]. \notag
\end{align}

\textbf{Step 4 -- Recognizing the total derivative.} Since
$p^np'=\tfrac1{n+1}(p^{n+1})'$, the integrand is an exact derivative:
\begin{equation}
\frac{d}{du}\left[\frac{p^4}4+\frac{(1-k^2)C^2p^{2k+2}}{2k+2}-\frac{k^2C^4p^{4k}}{4k}\right]. \label{eq:36}
\end{equation}
Simplifying $(1-k^2)/(2k+2)=-(k-1)/2$ and $k^2/(4k)=k/4$:
\begin{equation}
\frac{d}{du}\left[\frac{p^4}4-\frac{(k-1)C^2p^{2k+2}}2-\frac{kC^4p^{4k}}4\right]. \label{eq:37}
\end{equation}

\textbf{Step 5 -- Newton--Leibniz.}
\begin{equation}
J_{\mathrm{or}}=2\pi\left[\frac{p^4}4-\frac{(k-1)C^2p^{2k+2}}2-\frac{kC^4p^{4k}}4\right]_0^1. \label{eq:38}
\end{equation}
Simplifying the coefficients yields~\eqref{eq:34}.
\end{proof}

\subsection{Special cases}

\begin{corollary}[Original paraboloid case]
$p(u)=u$, so $p(0)=0$, $p(1)=1$:
\begin{equation}
J_{\mathrm{or}}=\pi\left(\frac12-(k-1)C^2-\frac k2C^4\right). \label{eq:39}
\end{equation}
For $k=2$ this gives:
$J_{\mathrm{or}}=\pi(\tfrac12-C^2-C^4)$.
\end{corollary}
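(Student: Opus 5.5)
The corollary follows by specialising Theorem~\ref{thm:main}, so the plan is a direct substitution rather than a new argument. With $p(u)=u$ the hypotheses of Theorem~\ref{thm:main} hold: $p\in C^1([0,1])$, and the resonant choice is $f=Cu^k$, which is the Euler-relation case recorded in the special-cases remark after Theorem~\ref{thm:resonant}. The boundary values are $p(0)=0$ and $p(1)=1$.

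I will insert these values into \eqref{eq:34}. Every bracket $p(1)^{m}-p(0)^{m}$ appearing there has exponent $m\in\{4,\,2k+2,\,4k\}$, and each such $m$ is at least $4$ for $k\ge1$. Hence every bracket equals $1-0=1$. This gives
\[
J_{\mathrm{or}}=\frac\pi2-\pi(k-1)C^2-\frac{\pi k}{2}C^4,
\]
and factoring out $\pi$ yields \eqref{eq:39}.

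For $k=2$ the coefficients become $(k-1)=1$ and $k/2=1$. This gives $\pi(\tfrac12-C^2-C^4)$, as stated.

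The only point that needs a check is that $0^{m}=0$ for each exponent, which requires every exponent to be positive. This holds because $k\ge1$, so there is no $0^0$ ambiguity.

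As an independent check, I can also integrate directly from \eqref{eq:38} with $p=u$ and $u\in[0,1]$. The primitive evaluated at $u=1$ is $\tfrac14-\tfrac{(k-1)C^2}{2}-\tfrac{kC^4}{4}$, and multiplying by $2\pi$ gives the same value.
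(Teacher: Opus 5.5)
Your proposal is correct and takes the same route as the paper: the corollary is simply Theorem~\ref{thm:main}, equation~\eqref{eq:34}, specialised to $p(0)=0$, $p(1)=1$, where each bracket equals $1$ because the exponents $4$, $2k+2$ and $4k$ are all positive. Your $k=2$ specialisation and your cross-check via~\eqref{eq:38} are also correct.
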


\begin{corollary}[Equal boundary values]
If $p(0)=p(1)$ (and hence in the resonant case $f(0)=f(1)$ as well), then
every bracket in~\eqref{eq:34} vanishes, so:
\begin{equation}
J_{\mathrm{or}}=0. \label{eq:40}
\end{equation}
This explains the zero values obtained for sphere-type ($p(u)=\sin(\pi u)$)
and torus-type ($p(u)=R+r\cos(2\pi u)$) profiles.
\end{corollary}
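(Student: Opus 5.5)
The statement to prove is the last corollary: if $p(0)=p(1)$, then $J_{\mathrm{or}}=0$.

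The plan is to read this off directly from the closed form \eqref{eq:34} of Theorem~\ref{thm:main}. That formula expresses $J_{\mathrm{or}}$ as a linear combination of three brackets:
\[
p(1)^4-p(0)^4,\qquad p(1)^{2k+2}-p(0)^{2k+2},\qquad p(1)^{4k}-p(0)^{4k}.
\]
Each bracket has the form $p(1)^m-p(0)^m$. Under the hypothesis $p(0)=p(1)$ each one vanishes, so $J_{\mathrm{or}}=0$ regardless of $C$ and $k$.

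I would also note why the hypothesis is consistent with the parenthetical remark in the statement. In the resonant case $f=Cp^k$, so $p(0)=p(1)$ forces $f(0)=f(1)$. No separate assumption on $f$ is needed, because \eqref{eq:34} is already written entirely in terms of the boundary values of $p$.

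To explain the examples, I would check the two profiles mentioned:
\begin{itemize}
\item for $p(u)=\sin(\pi u)$ we have $p(0)=p(1)=0$;
\item for $p(u)=R+r\cos(2\pi u)$ we have $p(0)=p(1)=R+r$.
\end{itemize}
Both fall under the corollary.

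There is essentially no obstacle here. The only point requiring care is that Theorem~\ref{thm:main} holds for arbitrary $p\in C^1([0,1])$, with no monotonicity or positivity assumption. That is true, since its proof uses only Newton--Leibniz on an exact derivative of a polynomial in $p$. As a result, non-injective profiles such as the sphere profile are covered.
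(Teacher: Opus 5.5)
Your proposal is correct and matches the paper's argument: each of the three brackets in \eqref{eq:34} has the form $p(1)^m-p(0)^m$, so all vanish when $p(0)=p(1)$. Your boundary values, $p(0)=p(1)=0$ for $\sin(\pi u)$ and $p(0)=p(1)=R+r$ for the torus-type profile, account for the two examples, and Theorem~\ref{thm:main} indeed needs only $p\in C^1([0,1])$.
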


\begin{corollary}[Scaling]
If $\tilde p(u)=\alpha p(u)$, and we choose the resonant perturbation as
$\tilde f(u)=C\tilde p(u)^k$, then:
\begin{align}
\tilde J_{\mathrm{or}}={}&\frac\pi2\alpha^4\big[p(1)^4-p(0)^4\big]
-\pi(k-1)C^2\alpha^{2k+2}\big[p(1)^{2k+2}-p(0)^{2k+2}\big]\notag\\
&-\frac{\pi k}2C^4\alpha^{4k}\big[p(1)^{4k}-p(0)^{4k}\big]. \label{eq:41}
\end{align}
The three terms scale with different powers ($\alpha^4$, $\alpha^{2k+2}$,
$\alpha^{4k}$), reflecting the non-homogeneous nature of the structure.
\end{corollary}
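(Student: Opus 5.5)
The plan is to obtain~\eqref{eq:41} as a direct specialisation of Theorem~\ref{thm:main}, with no new integration. The rescaled surface is again of the form~\eqref{eq:2}, with radial profile $\tilde p=\alpha p$ and perturbation $\tilde f=C\tilde p^{\,k}$. It therefore satisfies the resonance condition of Theorem~\ref{thm:resonant} with the same constant $C$.

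First I check the hypotheses of Theorem~\ref{thm:main} for $\tilde p$. Since $p\in C^1([0,1])$ and $\alpha$ is a constant, $\tilde p=\alpha p\in C^1([0,1])$, and $k\in\mathbb Z^+$ is unchanged. The proof of Theorem~\ref{thm:main} never used positivity of $p$. Step~1 uses only the addition formula. Steps~2--3 use the pointwise identity~\eqref{eq:14}, which is pure algebra once $f=Cp^k$. Steps~4--5 use $p^np'=\tfrac1{n+1}(p^{n+1})'$ and Newton--Leibniz. So the theorem applies verbatim to $\tilde p$, including for $\alpha\le0$.

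Applying~\eqref{eq:34} to $\tilde p$ gives the three brackets $\tilde p(1)^m-\tilde p(0)^m$ for $m\in\{4,\,2k+2,\,4k\}$. Substituting $\tilde p(j)=\alpha p(j)$, each bracket factors as $\alpha^m\big[p(1)^m-p(0)^m\big]$, which is exactly~\eqref{eq:41}. All three exponents are even, so the sign of $\alpha$ plays no role. The prefactors $\tfrac\pi2$, $-\pi(k-1)C^2$ and $-\tfrac{\pi k}{2}C^4$ are unchanged, because $C$ is held fixed under the rescaling.

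The only point needing care is this choice to keep $C$ fixed. If instead one rescaled the original perturbation, $\tilde f=\alpha f=\alpha C p^k=\alpha^{1-k}C\,\tilde p^{\,k}$, the effective constant would change and the powers of $\alpha$ would differ. The statement explicitly fixes $\tilde f=C\tilde p^{\,k}$, so that issue does not arise. The remark on non-homogeneity then follows by comparing the exponents: $4$, $2k+2$ and $4k$ coincide only when $k=1$.
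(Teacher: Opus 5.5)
Your proposal is correct and matches the paper's argument. The corollary has no separate proof in the paper; it is a direct specialisation of Theorem~\ref{thm:main} to the profile $\tilde p=\alpha p$ with $C$ held fixed, each bracket $\tilde p(1)^m-\tilde p(0)^m$ factoring as $\alpha^m\big[p(1)^m-p(0)^m\big]$, and your extra checks (no positivity of $p$ is used, so $\alpha\le0$ is allowed, and the three exponents coincide only when $k=1$) are accurate refinements the paper leaves implicit.
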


\subsection{The condition $J_{\mathrm{or}}=0$ in the normalized case}

In the normalized case $p(u)=u$ (with $p(0)=0$, $p(1)=1$), formula~\eqref{eq:34}
simplifies, and $J_{\mathrm{or}}=0$ exactly when
$\tfrac12-(k-1)C^2-\tfrac k2C^4=0$, i.e.
\begin{equation}
kC^4+2(k-1)C^2-1=0. \label{eq:42}
\end{equation}
For more general $p(u)$, the condition $J_{\mathrm{or}}=0$ also depends on
the boundary values $p(0),p(1)$; the following theorem applies to the
normalized case.

\begin{theorem}[Generalized critical value, normalized case]
\label{thm:critical}
Since $J_{\mathrm{or}}$ depends on $C$ only through $C^2$ and $C^4$, its
vanishing locus is symmetric in $C$. For $p(u)=u$, the condition
$J_{\mathrm{or}}=0$ holds exactly at $|C|=C^*(k)$, where
\begin{equation}
C^*(k)=\sqrt{\frac{\sqrt{k^2-k+1}-k+1}{k}}. \label{eq:43}
\end{equation}
Asymptotically for large $k$:
\begin{equation}
C^*(k)\sim\frac1{\sqrt{2k}}. \label{eq:44}
\end{equation}
\end{theorem}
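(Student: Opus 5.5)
Set $X:=C^2\ge0$. With $p(u)=u$, equation~\eqref{eq:42} becomes the quadratic $kX^2+2(k-1)X-1=0$ in $X$, so the plan is to solve this quadratic, keep the admissible root, and take a square root. The symmetry claim comes for free: by Theorem~\ref{thm:main} specialised to~\eqref{eq:39}, $J_{\mathrm{or}}$ is a polynomial in $C^2$, so its zero set is invariant under $C\mapsto -C$. It is then enough to work with $C\ge0$, equivalently $X\ge0$.

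The discriminant of the quadratic is $4(k-1)^2+4k=4(k^2-k+1)$. The roots are therefore
\[
X_\pm=\frac{-(k-1)\pm\sqrt{k^2-k+1}}{k}.
\]
The product of the roots is $-1/k<0$, so exactly one root is positive and one is negative. The negative root $X_-$ is inadmissible since $X=C^2\ge0$. The positive root is $X_+=(\sqrt{k^2-k+1}-k+1)/k$; positivity can also be checked directly from $k^2-k+1>(k-1)^2$, which is equivalent to $k>0$. So $J_{\mathrm{or}}=0$ exactly when $C^2=X_+$, which gives $|C|=C^*(k)$ as in~\eqref{eq:43}. This holds for every $k\in\mathbb Z^+$; for $k=1$ it gives $C^*=1$, consistent with~\eqref{eq:39}.

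For the asymptotics~\eqref{eq:44}, expand
\[
\sqrt{k^2-k+1}=k\sqrt{1-1/k+1/k^2}=k-\tfrac12+\tfrac{3}{8k}+O(k^{-2}).
\]
Hence the numerator $\sqrt{k^2-k+1}-k+1=\tfrac12+O(1/k)$, so $X_+=\tfrac1{2k}(1+O(1/k))$ and $C^*(k)\sim 1/\sqrt{2k}$. A cleaner route avoids the cancellation: rationalise to get
\[
X_+=\frac{1}{\sqrt{k^2-k+1}+k-1},
\]
using $(k^2-k+1)-(k-1)^2=k$. The denominator is $\sim 2k$, which gives the asymptotic immediately.

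The only delicate point is the cancellation in the numerator of~\eqref{eq:43}. A naive leading-order expansion gives $0$, so either the second-order term of the square root or the rationalised form is needed. I would present the rationalised form.
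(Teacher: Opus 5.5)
Your proposal is correct and follows the paper's proof essentially step for step: substitute $q=C^2$, solve $kq^2+2(k-1)q-1=0$, keep the positive root, and expand $\sqrt{k^2-k+1}=k-\tfrac12+O(1/k)$. Two details go slightly beyond the paper's terser version without changing the route: your product-of-roots argument ($-1/k<0$) makes explicit why exactly one root is admissible, and the rationalised form $X_+=1/(\sqrt{k^2-k+1}+k-1)$ avoids the cancellation in the numerator.
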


\begin{proof}
Substituting $q=C^2$ gives $kq^2+2(k-1)q-1=0$, with positive root
$q^*=\big[-(k-1)+\sqrt{(k-1)^2+k}\big]/k=\big[\sqrt{k^2-k+1}-k+1\big]/k$.
For the asymptotics: $\sqrt{k^2-k+1}=k\sqrt{1-1/k+1/k^2}=k-\tfrac12+O(1/k)$,
hence $q^*\sim\tfrac1{2k}$, so $C^*(k)\sim\sqrt{1/(2k)}$.
\end{proof}

\begin{table}[htbp]
\centering
\begin{tabular}{@{}lll@{}}
\toprule
$k$ & $C^*(k)$ & $1/\sqrt{2k}$ \\
\midrule
2 & $\sqrt{(\sqrt3-1)/2}\approx0.605$ & $0.500$ \\
3 & $\approx0.464$ & $0.408$ \\
4 & $\approx0.389$ & $0.354$ \\
5 & $\approx0.341$ & $0.316$ \\
10 & $\approx0.232$ & $0.224$ \\
100 & $\approx0.0710$ & $0.0707$ \\
\bottomrule
\end{tabular}
\caption{Critical values $C^*(k)$ for various $k$ (normalized case
$p(u)=u$). The value decreases monotonically with $k$, tending
asymptotically to $1/\sqrt{2k}$.}
\end{table}

\subsection{Local orientation reversal vs.\ global balance}

The two levels are distinct:

\textbf{Local orientation reversal ($k\geq2$).} The Duren-Jacobian
$J_F=1-k^2C^2p(u)^{2k-2}$ changes sign on $(0,1]$ when
$k^2C^2p(u)^{2k-2}>1$ for some $u$. For $p(u)=u$, this gives
$u>(k|C|)^{-1/(k-1)}$, falling within $[0,1]$ when $|C|>1/k$.

\textbf{The case $k=1$, separately.} Here $J_F=1-C^2$ is $u$-independent.
Therefore:
\[
k=1:\quad
\begin{cases}
J_F>0\ \text{(sense-preserving)}, & |C|<1,\\
J_F=0\ \text{(degenerate)}, & |C|=1,\\
J_F<0\ \text{(sense-reversing)}, & |C|>1.
\end{cases}
\]
For $k=1$, the entire projection changes orientation simultaneously; there
is no $u$-dependent threshold.

\textbf{Global balance ($k\geq2$, $p(u)=u$).} The total degree-weighted
moment $J_{\mathrm{or}}$ vanishes exactly at $|C|=C^*(k)$.

The relation between the two values for $k\geq2$:
\begin{equation}
C^*(k)>\frac1k,\qquad k\geq2. \label{eq:45}
\end{equation}
This means that global balance always occurs after local orientation
reversal: locally the overlap begins ($|C|>1/k$), then the total moment also
reaches zero ($|C|=C^*(k)$), and beyond that becomes negative.

\begin{table}[htbp]
\centering
\begin{tabular}{@{}llll@{}}
\toprule
$k$ & $1/k$ (local onset) & $C^*(k)$ (global balance) & difference \\
\midrule
2 & 0.500 & 0.605 & $+0.105$ \\
3 & 0.333 & 0.464 & $+0.131$ \\
4 & 0.250 & 0.389 & $+0.139$ \\
5 & 0.200 & 0.341 & $+0.141$ \\
\bottomrule
\end{tabular}
\caption{The local orientation reversal onset ($|C|>1/k$) always precedes
the global balance ($C=C^*(k)$) for $k\geq2$, $p(u)=u$.}
\end{table}

\subsection{Sense-preserving and orientation reversal}
In Duren's theory (\cite{duren2004}, Ch.\ 2) a harmonic mapping is
sense-preserving if $J_F>0$, and sense-reversing if $J_F<0$. In our case
these concepts apply to the $xy$-projection.

\begin{table}[htbp]
\centering
\begin{tabular}{@{}lllll@{}}
\toprule
$p(u)$ & $k$ & $C$ & Numerical & Analytic \\
\midrule
$u$ & 2 & 0.30 & 1.262606 & 1.262606 \\
$u$ & 3 & 0.30 & 0.967139 & 0.967139 \\
$2u$ & 2 & 0.30 & 0.522761 & 0.522761 \\
$u+0.5$ & 2 & 0.20 & 6.295752 & 6.295752 \\
$\sin(\pi u)$ & 2 & 0.20 & $\approx10^{-15}$ & 0 exactly \\
$R+r\cos(2\pi u)$, $R=2,r=1$ & 2 & 0.10 & $\approx10^{-14}$ & 0 exactly \\
\bottomrule
\end{tabular}
\caption{Numerical validation using Simpson's rule. In the nonzero cases,
numerical and analytic values agree to the precision shown in the table;
the relative error is $<10^{-4}$ in all cases. For sufficiently smooth integrands Simpson's rule
converges at rate $1/N^4$, against $1/N^2$ for the composite trapezoidal
rule; the profiles tested here are smooth, although the standing hypothesis
of the paper is only $p\in C^1$. As an independent check,
results were also verified using \texttt{scipy.integrate.quad} adaptive
quadrature.}
\end{table}

In our setting:
\begin{equation}
J_F=1-k^2C^2p(u)^{2k-2}. \label{eq:46}
\end{equation}

\begin{corollary}[Orientation reversal threshold, $k\geq2$]
For the normalized profile $p(u)=u$ and $k\geq2$, $J_F$ changes sign at
$u>(k|C|)^{-1/(k-1)}$, which falls within $[0,1]$ exactly when $|C|>1/k$.
For a general monotone profile the condition is that
$(k|C|)^{-1/(k-1)}$ lie strictly between $p(0)$ and $p(1)$.
\end{corollary}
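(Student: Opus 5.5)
The argument reads the sign of $J_F$ off the closed form \eqref{eq:46}, since by Theorem~\ref{thm:jacobian} $J_F$ depends on $u$ only through $p(u)$. For $k\ge2$ the function $g(t)=1-k^2C^2t^{2k-2}$ on $t\ge0$ is strictly decreasing whenever $C\neq0$. It equals $1$ at $t=0$ and vanishes at exactly one point, $t^{*}=(k|C|)^{-1/(k-1)}$. Indeed $k^2C^2(t^*)^{2k-2}=k^2C^2\,(k|C|)^{-2}=1$. So $J_F>0$ for $p<t^{*}$ and $J_F<0$ for $p>t^{*}$. If $C=0$ there is nothing to prove: $J_F\equiv1$ and no threshold exists. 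This matches the convention $p^*=\infty$ in the statement.

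For the normalised profile $p(u)=u$ I substitute $t=u$. Then $J_F$ changes sign on $[0,1]$ exactly at $u=t^{*}$, provided $t^{*}$ lies in $(0,1)$. Since $t^*>0$ always holds, the condition reduces to $(k|C|)^{-1/(k-1)}<1$. Because $-1/(k-1)<0$, this is equivalent to $k|C|>1$, that is, $|C|>1/k$. At the boundary case $|C|=1/k$ we get $t^*=1$. There $J_F$ vanishes only at the endpoint $u=1$ and is positive on $[0,1)$, so there is no sign change in the interior. This is why the inequality is strict.

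For a general monotone profile $p$, continuity and monotonicity give $p([0,1])=[\min(p(0),p(1)),\max(p(0),p(1))]$. The composite $J_F\circ p=g\circ p$ changes sign on $[0,1]$ if and only if this interval contains points on both sides of $t^{*}$. That holds exactly when $t^{*}$ lies strictly between $p(0)$ and $p(1)$. If instead $t^{*}$ equals an endpoint value or lies outside the range, then $g\circ p$ has constant sign, with possibly one zero at an endpoint.

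The main obstacle is only bookkeeping: keeping the sign change strict rather than allowing a mere zero. For a monotone profile that is not strictly monotone, $p$ may be constant equal to $t^{*}$ on a subinterval. In that case $J_F$ vanishes on the whole subinterval but still changes sign across it, and I would note this explicitly.
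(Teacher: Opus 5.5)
Your proof is correct and takes the same route as the paper, which likewise reads the sign of $J_F=1-k^2C^2p^{2k-2}$ off the closed form and solves $k^2C^2p^{2k-2}>1$ in its discussion of local orientation reversal preceding the corollary. Your version is more careful, treating $C=0$, the boundary case $|C|=1/k$ and non-strictly monotone $p$ explicitly.

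Two small remarks. First, the statement contains no convention $p^*=\infty$; that convention is yours, and harmless. Second, restricting $g$ to $t\ge0$ tacitly uses $p\ge0$, which is implicit in the paper's polar reading $w=p(u)e^{iv}$. That assumption is genuinely needed: $g$ is even in $t$, so a monotone profile taking negative values could make $J_F$ change sign without $(k|C|)^{-1/(k-1)}$ lying strictly between $p(0)$ and $p(1)$.
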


\begin{corollary}[The case $k=1$]
For $k=1$, $J_F=1-C^2$ on the entire $u$ interval, so there is no
$u$-dependent threshold. The whole projection is simultaneously
sense-preserving ($|C|<1$), degenerate ($|C|=1$), or sense-reversing
($|C|>1$).
\end{corollary}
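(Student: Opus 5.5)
The statement is the corollary for $k=1$, so the plan is a short specialisation of the resonant Jacobian formula. I would start from Theorem~\ref{thm:jacobian}, equation~\eqref{eq:24}:
\[
J_F=1-k^2C^2p(u)^{2k-2}.
\]
Setting $k=1$ makes the exponent $2k-2$ vanish, so $p(u)^{0}=1$ and $J_F=1-C^2$ identically in $u$ and in $v$. This holds without any assumption on $p$. One has to read $p^0$ as $1$ even at points where $p(u)=0$. This is justified because for $k=1$ we have $\Psi(w)=-iCw$, so $\Psi'(w)=-iC$ is constant and $|\Psi'|^2=C^2$ directly, with no power of $|w|$ appearing. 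I would state this explicitly, since it is the only place where a careless reading could go wrong.

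The trichotomy then follows from the sign of $1-C^2$:
\begin{itemize}
\item it is positive for $|C|<1$, so the projection is sense-preserving by Lewy's theorem and the Duren convention;
\item it vanishes for $|C|=1$, the degenerate case, in which $J_F\equiv0$ and $F(w)=w\pm i\bar w$ has line-valued image;
\item it is negative for $|C|>1$, so the projection is sense-reversing.
\end{itemize}

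For the absence of a $u$-dependent threshold, I would note that $J_F$ does not depend on $u$, so its sign cannot change across $[0,1]$. This is the contrast with the $k\ge2$ corollary, where the critical radius $(k|C|)^{-1/(k-1)}$ is defined. For $k=1$ that expression is undefined, because of the exponent $-1/0$.

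The proof has no real obstacle. The only point needing care is the convention $p^{0}=1$ at $p=0$, and the direct computation of $\Psi'$ above settles it.
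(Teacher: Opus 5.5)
Your proposal is correct and follows the paper's own reasoning: specialise $J_F=1-k^2C^2p(u)^{2k-2}$ to $k=1$ to get the constant $1-C^2$, then read the trichotomy off its sign. Two of your steps are correct additions that the paper leaves implicit. Computing $\Psi'(w)=-iC$ directly removes any worry about $p^0$ at $p=0$. Checking that $w\pm i\bar w$ is line-valued confirms the degenerate case.
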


\begin{remark}[$J_{\mathrm{or}}$ and the classical moment of inertia]
$J_{\mathrm{or}}$ is an oriented surface projection moment. Identification
with the classical physical moment of inertia is possible only under
additional geometric conditions, with the volume domain, orientation and
projection multiplicity all unambiguously fixed. For example, on a
sense-preserving region ($J_F\geq0$ everywhere) and a closed surface, with
appropriate divergence-theorem manipulation. Working out these conditions
in full lies outside the scope of this paper.
\end{remark}

\section{Higher degree-weighted moments}
\label{sec:higher}

The quadratic moment $J_{\mathrm{or}}$ of Section~\ref{sec:Jor} is the $n=1$
case of a more general family. Recall $\zeta=x+iy$; define, for
$n\in\mathbb Z^+$, the \emph{degree-weighted $n$-th moment}
\begin{equation}
J_n:=\iint_D|\zeta(u,v)|^{2n}\,n_z\,du\,dv,\qquad J_1=J_{\mathrm{or}}. \label{eq:47}
\end{equation}
This directly addresses the first open question of
Section~\ref{sec:open}.

\subsection{Closed form for all $n$}

\begin{theorem}[Closed form for all $n$]
\label{thm:Jn}
For the resonant surface~\eqref{eq:2} ($f=Cp^k$, $p\in C^1([0,1])$,
$k\in\mathbb Z^+$), every moment $J_n$ admits a closed form depending only
on $p(0)$ and $p(1)$. Writing $\Delta p^m:=p(1)^m-p(0)^m$,
\begin{equation}
J_n=\pi\sum_{s=0}^{n}\binom ns^{2}C^{2s}
      \frac{\Delta p^{\,2n+2+2(k-1)s}}{n+1+(k-1)s}
-\pi k^2\sum_{s=0}^{n}\binom ns^{2}C^{2s+2}
      \frac{\Delta p^{\,2n+2+2(k-1)(s+1)}}{n+1+(k-1)(s+1)}.
\label{eq:48}
\end{equation}
\end{theorem}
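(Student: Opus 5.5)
The plan is to repeat the proof of Theorem~\ref{thm:main}, with the quadratic weight replaced by $|\zeta|^{2n}$. Step~1 of that proof, together with $f=Cp^k$, gives
\[
|\zeta|^2=p^2+C^2p^{2k}+2Cp^{k+1}\sin\big((k+1)v\big).
\]
It is cleaner to work in complex form. Write $\zeta=pe^{iv}+iCp^ke^{-ikv}$ and set $a:=pe^{iv}$, $b:=iCp^ke^{-ikv}$. Then $|\zeta|^{2n}=(a+b)^n(\bar a+\bar b)^n$. Expanding both factors binomially, a typical term is
\[
\binom{n}{s}\binom{n}{t}\,a^{n-s}b^{s}\,\bar a^{\,n-t}\bar b^{\,t}.
\]
Its $v$-dependence is $e^{i(t-s)v}e^{-ik(s-t)v}=e^{-i(k+1)(s-t)v}$. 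Integrating over $[0,2\pi]$ therefore kills every term with $s\neq t$. Each diagonal term contributes
\[
\binom ns^2|a|^{2(n-s)}|b|^{2s}=\binom ns^2C^{2s}p^{2n+2(k-1)s}.
\]
This yields the $v$-average
\[
\frac1{2\pi}\int_0^{2\pi}|\zeta|^{2n}\,dv=\sum_{s=0}^n\binom ns^2C^{2s}p^{2n+2(k-1)s}.
\]
As a check, for $n=1$ this gives $p^2+C^2p^{2k}$, matching \eqref{eq:35}.

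Next I invoke Theorem~\ref{thm:jacobian}: in the resonant case $n_z=pp'(1-k^2C^2p^{2k-2})$, and this is independent of $v$. So the $v$-integral of $|\zeta|^{2n}n_z$ is the product of $2\pi$, the average above, and $n_z$. No sign assumption on $J_F$ is needed here, since only pointwise $v$-independence is used. Multiplying out produces two sums:
\[
2\pi\sum_s\binom ns^2C^{2s}p^{2n+1+2(k-1)s}p' \;-\; 2\pi k^2\sum_s\binom ns^2C^{2s+2}p^{2n+1+2(k-1)(s+1)}p'.
\]
In the second sum I have used $p^{2k-2}p^{2n+2(k-1)s}=p^{2n+2(k-1)(s+1)}$. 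Each summand has the form $p^mp'=(p^{m+1})'/(m+1)$ with $m+1=2n+2+2(k-1)s$, respectively $2n+2+2(k-1)(s+1)$. These exponents are $\ge 2n+2\ge 4$, so no division by zero occurs. Moreover $p^{m+1}$ is $C^1$ because $p\in C^1$, so Newton--Leibniz applies exactly as in Step~5 of Theorem~\ref{thm:main}. Integrating over $u\in[0,1]$ and absorbing the factor $2$ into the denominator turns $2\pi/(2n+2+2(k-1)s)$ into $\pi/(n+1+(k-1)s)$. This gives \eqref{eq:48} with $\Delta p^{\,m}$ as defined there.

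The only step that needs care is the orthogonality computation: one must check that the phase of the $(s,t)$ term is exactly $(k+1)(t-s)v$, so that only the diagonal survives and no resonant off-diagonal terms appear. This is where the specific conjugate-monomial structure $\bar w^{\,k}$ enters. For $k\ge1$ we have $k+1\neq0$, so the argument is clean. The remaining work is bookkeeping of exponents.

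As a final consistency check, I will specialise to $n=1$ and recover \eqref{eq:34}. The $C^2$ terms are $\pi C^2\Delta p^{2k+2}/(k+1)-\pi k^2C^2\Delta p^{2k+2}/(k+1)=-\pi(k-1)C^2\Delta p^{2k+2}$. The $C^4$ term comes from $s=1$ in the second sum: $-\pi k^2C^4\Delta p^{4k}/(2k)=-\tfrac{\pi k}{2}C^4\Delta p^{4k}$. Both agree with \eqref{eq:34}. Setting $n=0$ similarly recovers Theorem~\ref{thm:general}.
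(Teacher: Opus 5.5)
Your proposal is correct and follows essentially the same route as the paper's proof. Both expand $\zeta^n\bar\zeta^{\,n}$ binomially, keep only the diagonal terms via the phase $e^{i(k+1)(t-s)v}$ to get the $v$-average \eqref{eq:49}, multiply by the $v$-independent $n_z$ of Theorem~\ref{thm:jacobian}, and integrate term by term with $p^mp'=\tfrac{1}{m+1}(p^{m+1})'$ and Newton--Leibniz.
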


\begin{proof}
Write $\zeta=pe^{iv}+iCp^ke^{-ikv}$ and expand the two factors of
$|\zeta|^{2n}=\zeta^n\bar\zeta^{\,n}$ separately:
\[
\zeta^n=\sum_{a=0}^n\binom na i^aC^a p^{\,n-a+ka}e^{i[(n-a)-ka]v},\qquad
\bar\zeta^{\,n}=\sum_{b=0}^n\binom nb(-i)^bC^b p^{\,n-b+kb}e^{-i[(n-b)-kb]v}.
\]
In the product, the exponent of $e^{i(\cdot)v}$ is $(k+1)(b-a)$, so only the
diagonal terms $a=b$ survive integration over $v\in[0,2\pi]$. On the diagonal
the phases cancel, $i^a(-i)^a=1$, and the powers of $p$ combine to
$2(n-a+ka)=2n+2(k-1)a$, giving
\begin{equation}
\frac1{2\pi}\int_0^{2\pi}|\zeta|^{2n}\,dv
=\sum_{s=0}^n\binom ns^{2}C^{2s}p^{\,2n+2(k-1)s},
\label{eq:49}
\end{equation}
a polynomial in $p$ alone. In the resonant case
$n_z=pp'(1-k^2C^2p^{2k-2})$ is $v$-independent
(Theorem~\ref{thm:jacobian}); multiplying by $n_z$ and using
$p^mp'=\tfrac1{m+1}(p^{m+1})'$ term by term makes the integrand an exact
derivative in $u$, and the fundamental theorem of calculus gives~\eqref{eq:48}.
\end{proof}

\begin{remark}[Consistency and explicit low-order cases]
For $n=1$, formula~\eqref{eq:48} reduces exactly to the main theorem~\eqref{eq:34}. The next
cases, verified both symbolically and against direct numerical quadrature
-- including on non-injective profiles $p(u)$, confirming the
boundary-only dependence -- are:
\begin{align}
J_2={}&\frac\pi3\Delta p^6+\pi(2-k)C^2\Delta p^{2k+4}
+\pi(1-2k)C^4\Delta p^{4k+2}-\frac{\pi k}3C^6\Delta p^{6k}, \label{eq:50}\\[4pt]
J_3={}&\frac\pi4\Delta p^8+\pi(3-k)C^2\Delta p^{2k+6}
+\frac{9\pi(1-k)}2C^4\Delta p^{4k+4}\notag\\
&+\pi(1-3k)C^6\Delta p^{6k+2}-\frac{\pi k}4C^8\Delta p^{8k}. \label{eq:51}
\end{align}
Formula~\eqref{eq:48} is moreover valid at $n=0$, where $|\zeta|^0\equiv1$ and the
two sums collapse to
$J_0=\pi\Delta p^2-\pi kC^2\Delta p^{2k}$; by Theorem~\ref{thm:general} with
$f=Cp^k$ this is exactly $I_{e_z}$. The resonant flux formula is therefore
the zeroth term of the moment sequence, and~\eqref{eq:48} covers the whole family
$n\ge0$ in one expression.
\end{remark}

\subsection{A moment functional}

\begin{remark}[A moment functional]
\begin{sloppypar}
Formula~\eqref{eq:48} shows that
$\mathcal L(P):=\iint_DP(|\zeta|^2)\,n_z\,du\,dv$ satisfies
$\mathcal L(q^n)=J_n$ for $q:=|\zeta|^2$, i.e.\ $\{J_n\}$ is a genuine
moment sequence of the pushforward, under $q$, of the signed measure
$n_z\,du\,dv$. Whenever $n_z\geq0$ on $D$ this pushforward is a positive
measure --- for the normalized profile $p(u)=u$ this holds exactly for
$|C|\leq1/k$, while in general the sign of $p'$ and the range of $p$ enter
as well --- and then $\langle P,Q\rangle:=\mathcal L(PQ)$ is a positive
semidefinite form on polynomials in $q$, positive definite as soon as no
nonzero polynomial vanishes on the whole support of the pushforward, and the classical Gram--Schmidt construction yields
an orthogonal polynomial sequence $P_n(q)$ with respect to it. This is an
automatic consequence of positivity, common to any moment sequence of a
positive measure; we record it here only as a structural remark, not as an
independent result, and do not pursue the resulting orthogonal polynomials
further in this paper.
\end{sloppypar}
\end{remark}

\section{Multiplicity-weighted moments across the caustic}
\label{sec:compensated}

Theorem~\ref{thm:Jn} assumes nothing about the sign of $n_z$, but when
$|C|>1/k$ ($k\geq2$) the projection ceases to be sense-preserving and
develops a critical circle (Section~\ref{sec:Jor}), and $J_n$, being
degree-weighted, lets overlapping sheets cancel. If instead every sheet is
to contribute positively, the classical remedy is the area formula:
integrate against $|n_z|$ rather than $n_z$, which replaces the degree by
the multiplicity function of Section~\ref{sec:degree-multiplicity}. We show that this
multiplicity-weighted quantity remains closed-form, with the same finitely
many extra evaluation points for every $n$.

\begin{sloppypar}
\begin{theorem}[Multiplicity-weighted moment]
\label{thm:compensated}
Let
\[
\mathcal P_n(p):=\pi\sum_{s=0}^n\binom ns^{2}C^{2s}
   \frac{p^{2n+2+2(k-1)s}}{n+1+(k-1)s}
-\pi k^2\sum_{s=0}^n\binom ns^{2}C^{2s+2}
   \frac{p^{2n+2+2(k-1)(s+1)}}{n+1+(k-1)(s+1)},
\]
so that $J_n=\mathcal P_n(p(1))-\mathcal P_n(p(0))$ by Theorem~\ref{thm:Jn}. Define the
multiplicity-weighted moment
\[
J_n^{\pm}:=\iint_D|\zeta|^{2n}\,|n_z|\,du\,dv.
\]
Then for every $n\in\mathbb Z^+$, and wherever $n_z(u)\neq0$,
$\operatorname{sign}\!\big(\tfrac{d}{du}\mathcal P_n(p(u))\big)=
\operatorname{sign}(n_z(u))$ --- note that the derivative is taken along
$u$, since $n_z$ carries the factor $p'(u)$ whereas $\mathcal P_n'(p)$ does
not --- so
\[
J_n^{\pm}=\sum_{i}\big|\mathcal P_n(p(u_{i+1}))-\mathcal P_n(p(u_i))\big|,
\]
the sum running over the maximal subintervals $[u_i,u_{i+1}]$ of $[0,1]$
between consecutive \emph{sign-changing} zeros of $n_z$; a zero at which
$n_z$ does not change sign is not a breakpoint. These breakpoints are the
same for every $n$, independent of the weight $|\zeta|^{2n}$. The
total-variation identity itself holds unconditionally; for the sum to
reduce to finitely many terms we assume that $n_z$ changes sign finitely
often on $[0,1]$, as holds whenever $p$ is piecewise monotone with finitely
many monotonicity intervals.

In particular, let $k\ge2$, let $p$ be monotonic with $p(u)>0$ on $(0,1]$,
and set
\begin{equation}
p^*=(k|C|)^{-1/(k-1)}, \label{eq:52}
\end{equation}
the unique positive zero of $1-k^2C^2p^{2k-2}$. Then there is at most one
interior breakpoint, namely where $p(u)=p^*$, and when $p^*$ lies strictly
between $p(0)$ and $p(1)$ --- for $p(u)=u$ this is exactly $|C|>1/k$ ---
\begin{equation}
J_n^{\pm}=2\mathcal P_n(p^*)-\mathcal P_n(p(0))-\mathcal P_n(p(1)). \label{eq:53}
\end{equation}
\end{theorem}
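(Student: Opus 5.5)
The plan is to reduce everything to a one-variable statement. Set $g_n(u):=\int_0^{2\pi}|\zeta(u,v)|^{2n}\,dv$. In the resonant case $n_z=n_z(u)$ is $v$-independent by Theorem~\ref{thm:jacobian}, so $|n_z|$ is too. Fubini then gives
\[
J_n^{\pm}=\int_0^1 g_n(u)\,|n_z(u)|\,du .
\]
By \eqref{eq:49}, $g_n(u)=2\pi\sum_s\binom ns^2C^{2s}p^{2n+2(k-1)s}$. The proof of Theorem~\ref{thm:Jn} shows that $g_n(u)\,n_z(u)=\tfrac{d}{du}\mathcal P_n(p(u))$; the termwise antiderivative of that product is exactly $\mathcal P_n$, and the chain rule holds for $p\in C^1$. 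Hence $\Theta_n(u):=\mathcal P_n(p(u))$ is $C^1$ with $\Theta_n'=g_n\,n_z$.

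\textbf{Step 1 (the sign statement).} I would show that $g_n>0$ wherever $n_z\ne0$. Every summand of $g_n$ is nonnegative, and the $s=0$ term is $2\pi p^{2n}$. So $g_n>0$ whenever $p(u)\neq0$. If $p(u)=0$, then $n_z=pp'J_F=0$, so the claim is vacuous there. It follows that $\operatorname{sign}\Theta_n'=\operatorname{sign}n_z$ off the zero set of $n_z$. In particular $\Theta_n'$ and $n_z$ have the same sign-changing zeros, and no new ones appear for any $n$. This is the $n$-independence of the breakpoints.

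\textbf{Step 2 (the total-variation identity).} Since $|\Theta_n'|=g_n|n_z|$, we get
\[
J_n^{\pm}=\int_0^1|\Theta_n'|\,du=\operatorname{TV}_{[0,1]}(\Theta_n),
\]
and this holds unconditionally. Now take the maximal intervals $[u_i,u_{i+1}]$ between consecutive sign-changing zeros. On each one, $n_z$ has constant weak sign, so $\Theta_n'$ does too, and therefore
\[
\int_{u_i}^{u_{i+1}}|\Theta_n'|=\Big|\int_{u_i}^{u_{i+1}}\Theta_n'\Big|=|\Theta_n(u_{i+1})-\Theta_n(u_i)|.
\]
Non-sign-changing zeros do no harm, because the sign of $\Theta_n'$ is unchanged across them. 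Summing over the intervals gives the formula. When $p$ is piecewise monotone with finitely many pieces, sign changes of $n_z=pp'J_F(p)$ can only occur at the switches of $p'$ or at points where $p$ crosses the finitely many zeros of the polynomial $J_F(p)$. The main care needed is here: sets where $p'=0$ on an interval, or accumulating zeros, must be shown not to produce extra sign changes. I would handle this by working with weak sign on closed intervals.

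\textbf{Step 3 (the monotone case).} Assume $k\ge2$, $p$ monotone and $p>0$ on $(0,1]$. Then $pp'$ has a constant weak sign. Also $J_F=1-k^2C^2p^{2k-2}$ is strictly decreasing in $p>0$ and vanishes only at $p^*$, so sign changes of $n_z$ occur only where $p$ crosses $p^*$.

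Suppose first that $p$ is nondecreasing and $p(0)<p^*<p(1)$. Then $n_z\ge0$ before the crossing and $n_z\le0$ after it. So $\Theta_n$ increases from $\mathcal P_n(p(0))$ up to $\mathcal P_n(p^*)$ and then decreases to $\mathcal P_n(p(1))$, giving
\[
J_n^{\pm}=\big(\mathcal P_n(p^*)-\mathcal P_n(p(0))\big)+\big(\mathcal P_n(p^*)-\mathcal P_n(p(1))\big),
\]
which is \eqref{eq:53}. If $p$ is nonincreasing, the roles of the two pieces are reversed, but $\Theta_n$ again has its extremum at $p^*$ with the same signs, so the same expression results. The crossing set $\{p=p^*\}$ may be a closed interval if $p$ is merely monotone, but $\Theta_n$ is constant there, so the computation is unaffected.
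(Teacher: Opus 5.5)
Your proposal is correct and follows the paper's proof essentially step for step: both rest on the identity $\Theta_n'=g_n\,n_z$ with $g_n\ge0$ (your observation that the $s=0$ term makes $g_n>0$ off $p=0$ matches the paper's parenthetical), both read $J_n^{\pm}$ as the total variation of $\mathcal P_n\circ p$, and both locate the only sign change in the monotone case at $p=p^*$. One small imprecision is in your finiteness enumeration: you should list the zeros of the full factor $p\,J_F(p)$ rather than of $J_F(p)$ alone, because $n_z$ also changes sign where a sign-changing $p$ crosses $0$; this does not affect the finiteness conclusion.
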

\end{sloppypar}

\begin{proof}
Set $G_n(p):=\tfrac1{2\pi}\int_0^{2\pi}|\zeta|^{2n}\,dv$. By construction
$\mathcal P_n'(p)=2\pi p\big(1-k^2C^2p^{2k-2}\big)G_n(p)$, and $G_n\geq0$ is a $v$-average
of a manifestly nonnegative integrand, hence itself nonnegative (in fact
positive off $p=0$). Along the curve $u\mapsto p(u)$,
\[
\frac{d}{du}\mathcal P_n(p(u))=\mathcal P_n'(p(u))\,p'(u)=2\pi G_n(p(u))\,n_z(u),
\]
which therefore has exactly the sign of $n_z(u)$. Hence
$J_n^{\pm}=\int_0^1|{\textstyle\frac{d}{du}}\mathcal P_n(p(u))|\,du$ is the total
variation of $u\mapsto\mathcal P_n(p(u))$ on $[0,1]$, which decomposes as the sum
of $|\Delta\mathcal P_n|$ over the intervals between consecutive
sign-changing zeros of $n_z$; a tangential zero contributes nothing, the
two adjacent increments having the same sign. For $k\ge2$, $p$ monotonic
and $p>0$, $n_z=pp'(1-k^2C^2p^{2k-2})$ can \emph{change sign} on $(0,1]$
only where the bracket does --- the factor $p'$ keeps a constant sign by
monotonicity, although it may vanish at isolated points --- i.e.\ at the
single value~\eqref{eq:52}, giving~\eqref{eq:53}. For $k=1$ the bracket is the constant
$1-C^2$, so no interior breakpoint occurs at all.
\end{proof}

\begin{remark}[Multiple breakpoints for non-monotonic profiles]
If $p$ is not monotonic, $n_z$ can vanish both where $p'(u)=0$ and where
$p(u)=p^*$. Only those zeros at which $n_z$ actually changes sign are
breakpoints: a tangential zero of either kind contributes nothing, since
$\mathcal P_n\circ p$ is then monotone across it. The count and location of
the genuine breakpoints depend on the specific profile and are not given by
a universal formula. We verified this on an explicit
non-monotonic, non-injective profile with two breakpoints of mixed origin,
where Theorem~\ref{thm:compensated} still holds with the breakpoints found
numerically.
\end{remark}

\begin{remark}[Verification]
For $p(u)=u$, $k=2$, $C=0.7$ (a folding case, $C^*\approx0.605<0.7$), the
degree-weighted moment is $J_1\approx-0.7228$, while the
multiplicity-weighted moment evaluates to $J_1^{\pm}\approx1.0295$ and $J_2^{\pm}\approx1.5751$,
both matching~\eqref{eq:53} with the same $p^*=(k|C|)^{-1/(k-1)}\approx0.7143$ to
machine precision.
\end{remark}

\begin{remark}[Scope: nonnegativity of the weight is essential]
\label{rem:compensated-scope}
The mechanism above rests on two facts, and the second is not incidental.
First, in the resonant case $n_z$ is $v$-independent and factors out of the
$v$-integral. Second, the $v$-average
$G_n(p)=\frac1{2\pi}\int_0^{2\pi}|\zeta|^{2n}\,dv$ is \emph{nonnegative},
which is what makes $\frac{d}{du}\mathcal P_n(p(u))=2\pi G_n(p(u))\,n_z(u)$ carry
exactly the sign of $n_z$ and so identifies $J_n^{\pm}$ with a total
variation. For a complex weight $\zeta^m\bar\zeta^\ell$ the corresponding
$v$-average is not real-valued, let alone nonnegative, and the
identification fails. The multiplicity-weighted construction therefore does
\emph{not} extend to the moments $M_{m,\ell}$ of
Section~\ref{sec:complex-weighted}, and we claim no such extension.
\end{remark}

\section{Complex-weighted moments: a vanishing theorem}
\label{sec:complex-weighted}

The moments $J_n$ use the real weight $|\zeta|^{2n}=\zeta^n\bar\zeta^n$.
The natural complex-weighted generalization, and the object that connects
most directly to classical harmonic-moment
theory~\cite{GustafssonTkachev2009}, is
\[
M_{m,\ell}:=\iint_D\zeta^m\bar\zeta^\ell\,n_z\,du\,dv,\qquad
m,\ell\in\mathbb N_0,\qquad M_{n,n}=J_n.
\]

\begin{theorem}[Vanishing theorem]
\label{thm:vanishing}
On the surface class~\eqref{eq:2}, for \emph{arbitrary} $f\in C^1([0,1])$ ---
resonance is not required --- $M_{m,\ell}=0$ unless $(k+1)\mid(m-\ell)$. On
the resonant surface $f=Cp^k$, when $(k+1)\mid(m-\ell)$, $M_{m,\ell}$ again
admits a closed boundary-only form by the same mechanism as
Theorem~\ref{thm:Jn}. In particular, for
$\ell=0$ and $m=(k+1)t$ ($t\in\mathbb Z^+$), exactly one term survives the
$v$-integration and
\begin{equation}
M_{(k+1)t,\,0}=\pi\binom{(k+1)t}{kt}i^tC^t
\left[\frac{\Delta p^{2kt+2}}{kt+1}-\frac{kC^2}{t+1}\Delta p^{2k(t+1)}\right].
\label{eq:54}
\end{equation}
\end{theorem}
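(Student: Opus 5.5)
The plan is a Fourier-mode count in $v$, in the spirit of the proof of Theorem~\ref{thm:Jn}, but now carried out with the full non-resonant $n_z$ of Proposition~\ref{prop:nz}. For arbitrary $f\in C^1([0,1])$ write the projection as $\zeta=p\,e^{iv}+i f\,e^{-ikv}$; this is the same complexification as~\eqref{eq:18}, with $Cp^k$ replaced by $f$.

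\textbf{Vanishing for arbitrary $f$.} Expand both factors binomially:
\[
\zeta^m=\sum_{a=0}^m\binom ma i^a f^a p^{m-a}e^{i[(m-a)-ka]v},\qquad
\bar\zeta^{\,\ell}=\sum_{b=0}^\ell\binom \ell b(-i)^b f^b p^{\ell-b}e^{-i[(\ell-b)-kb]v}.
\]
The $(a,b)$ term of the product therefore carries the frequency $(m-\ell)-(k+1)(a-b)$. By Proposition~\ref{prop:nz}, $n_z=(pp'-kff')+(pf'-kfp')\sin((k+1)v)$. Writing the sine as $\tfrac1{2i}\big(e^{i(k+1)v}-e^{-i(k+1)v}\big)$ shows that $n_z$ contains only the frequencies $0$ and $\pm(k+1)$. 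Every term of $\zeta^m\bar\zeta^{\,\ell}n_z$ thus has a frequency of the form $(m-\ell)-(k+1)j$ with $j\in\mathbb Z$. Since $\int_0^{2\pi}e^{irv}\,dv=0$ for every integer $r\neq0$, the $v$-integral vanishes identically in $u$ unless $(k+1)\mid(m-\ell)$. Integrating in $u$ then gives $M_{m,\ell}=0$ in that case. No relation between $p$ and $f$ is used anywhere in this step, which is why resonance is not needed.

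\textbf{Closed form in the resonant case.} Now take $f=Cp^k$. By Theorem~\ref{thm:jacobian}, $n_z=pp'(1-k^2C^2p^{2k-2})$ is $v$-independent, so only the frequency-zero terms of $\zeta^m\bar\zeta^{\,\ell}$ survive. These are exactly the pairs with $a-b=(m-\ell)/(k+1)$. Each such pair contributes a constant multiple of a power of $p$, so $\frac1{2\pi}\int_0^{2\pi}\zeta^m\bar\zeta^{\,\ell}\,dv$ is a polynomial in $p$ alone. Multiplying by $n_z$ gives a finite sum of terms $p^{r}p'$. Each of these is an exact derivative, $p^rp'=\frac1{r+1}(p^{r+1})'$, and Newton--Leibniz yields a boundary-only expression in $\Delta p^{\,\cdot}$, exactly as in Theorem~\ref{thm:Jn}.

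\textbf{The case $\ell=0$, $m=(k+1)t$.} Here $b=0$ is forced, so the surviving index is $a=t$, and only one term remains: $\binom{(k+1)t}{t}i^tC^t p^{(k+1)t-t+kt}=\binom{(k+1)t}{kt}i^tC^tp^{2kt}$. Then
\[
2\pi\int_0^1 p^{2kt}\,pp'\big(1-k^2C^2p^{2k-2}\big)\,du
=2\pi\Big[\tfrac{\Delta p^{2kt+2}}{2kt+2}-\tfrac{k^2C^2\Delta p^{2kt+2k}}{2kt+2k}\Big],
\]
and simplifying the coefficients gives~\eqref{eq:54}.

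\textbf{Main obstacle.} The only real point of care is the first step: one must check that the $\sin((k+1)v)$ term of the non-resonant $n_z$ shifts frequencies only by multiples of $k+1$, so that it cannot rescue a term whose frequency is not divisible by $k+1$. The Fourier bookkeeping above settles this; everything after that is routine algebra.
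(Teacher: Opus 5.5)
Your proof is correct. Its second half, the resonant closed form and the $\ell=0$ evaluation, is the paper's argument. You index the expansion of $\zeta^m$ by the number of perturbation factors, so your surviving index $a=t$ corresponds to the paper's $a=kt$. The coefficient $\binom{(k+1)t}{kt}i^tC^t$ and the power $p^{2kt}$ come out the same.

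Where you differ is the vanishing step. The paper expands nothing there. It observes that $\zeta(u,v+\tfrac{2\pi}{k+1})=\rho\,\zeta(u,v)$ with $\rho=e^{2\pi i/(k+1)}$, because $-k\equiv1\pmod{k+1}$. It then notes that $n_z$ is invariant under the same shift, since its only $v$-dependence is through $\sin((k+1)v)$. This gives $M_{m,\ell}=\rho^{\,m-\ell}M_{m,\ell}$. You instead carry out the Fourier bookkeeping explicitly, including the $\pm(k+1)$ modes of the non-resonant $n_z$.

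Both arguments encode the same fact: every mode of $\zeta$ has frequency $\equiv1\pmod{k+1}$, and every mode of $n_z$ has frequency $\equiv0\pmod{k+1}$. The trade-off is as follows.
\begin{itemize}
\item The symmetry version is shorter. It applies verbatim, with no expansion, to any weight $g$ satisfying $g(\rho\zeta)=\rho^{s}g(\zeta)$, not only to polynomials in $\zeta,\bar\zeta$.
\item Your version uses one device for both halves of the theorem. It also shows concretely which $(a,b)$ pairs the $\sin((k+1)v)$ term could shift, and why none of them reaches frequency zero.
\end{itemize}
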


\begin{proof}
The vanishing is a symmetry statement. Put $\rho:=e^{2\pi i/(k+1)}$ and
replace $v$ by $v+2\pi/(k+1)$ in the complex form of the projection,
$\zeta=pe^{iv}+ife^{-ikv}$, written here for arbitrary $f$. The base
term acquires the factor $\rho$, and the perturbation acquires
$e^{-2\pi ik/(k+1)}=\rho$, since $-k\equiv1\pmod{k+1}$. Hence
\[
\zeta\big(u,\,v+\tfrac{2\pi}{k+1}\big)=\rho\,\zeta(u,v),
\]
i.e.\ the projection has exact $(k+1)$-fold rotational symmetry. The
decisive point is that $n_z$ is invariant under the same shift, and this too
without any resonance hypothesis: by Proposition~\ref{prop:nz} the only
$v$-dependent term of $n_z$ is $\big(pf'-kfp'\big)\sin((k+1)v)$, and
$\sin((k+1)v)$ is exactly $\tfrac{2\pi}{k+1}$-periodic. Since $D$ is
invariant as well, substituting the shift into the defining integral gives
\[
M_{m,\ell}=\rho^{\,m-\ell}M_{m,\ell},
\]
so $M_{m,\ell}=0$ whenever $\rho^{\,m-\ell}\neq1$, that is, whenever
$(k+1)\nmid(m-\ell)$.

For the closed form we now assume resonance, $f=Cp^k$, and expand:
\begin{gather*}
\zeta^m=\sum_{a=0}^m\binom ma i^{m-a}C^{m-a}p^{a+k(m-a)}e^{i[a-k(m-a)]v},\\
\bar\zeta^\ell=\sum_{b=0}^\ell\binom\ell b(-i)^{\ell-b}C^{\ell-b}
   p^{b+k(\ell-b)}e^{i[k(\ell-b)-b]v},
\end{gather*}
and the exponent of $e^{i(\cdot)v}$ in a term of the product is
$(k+1)(a-b)-k(m-\ell)$, so only terms with $(k+1)(a-b)=k(m-\ell)$ survive
the $v$-integration --- consistently with the symmetry argument, as
$\gcd(k+1,k)=1$.

When $(k+1)\mid(m-\ell)$, write $m-\ell=(k+1)t$; the surviving terms
satisfy $a-b=kt$ with $0\le a\le m$, $0\le b\le\ell$, a finite set of
$(a,b)$ pairs. As in Theorem~\ref{thm:Jn}, each surviving term contributes
a $u$-independent multiple of a power of $p$; multiplying by
$n_z=pp'(1-k^2C^2p^{2k-2})$ and integrating in $u$ via
$p^np'=\tfrac1{n+1}(p^{n+1})'$ again yields an exact derivative, hence a
boundary-only closed form.

For $\ell=0$, $b$ is forced to $0$, hence $a=kt$ is the unique surviving
index; the coefficient is $\binom{(k+1)t}{kt}i^{(k+1)t-kt}C^{(k+1)t-kt}=
\binom{(k+1)t}{kt}i^tC^t$ and the $p$-power is $a+k(m-a)=kt+kt=2kt$.
Carrying this single term through the $u$-integration as above gives~\eqref{eq:54}.
\end{proof}

\begin{remark}[The number $k+1$, twice]
The integer $k+1$ enters the paper at two places: in
Proposition~\ref{prop:nz} as the frequency of the single $v$-dependent term
of $n_z$, and here as the order of the rotational symmetry of the
projection. These are the same symmetry seen twice. The base term of $\zeta$
carries frequency $+1$ and the perturbation frequency $-k$, so a common
rotation by $\theta$ multiplies both by the same phase exactly when
$(k+1)\theta\equiv0$; the residual azimuthal dependence of $n_z$ is
invariant under precisely that rotation. This is why the vanishing half of
Theorem~\ref{thm:vanishing} is insensitive to resonance: resonance kills the
$\sin((k+1)v)$ term outright, but the symmetry argument only needs it to be
$\tfrac{2\pi}{k+1}$-periodic, which it is in any case.
\end{remark}

\begin{remark}[Reality and consistency]
Since $n_z$ is real, $\overline{M_{m,\ell}}=M_{\ell,m}$; in particular
$M_{n,n}=J_n$ is real, matching Theorem~\ref{thm:Jn}. For $\ell=0$,
$m=(k+1)t$, formula~\eqref{eq:54} carries a phase $i^t$: purely imaginary for
$t$ odd, real for $t$ even. We verified~\eqref{eq:54} numerically for
$t=1,2,3$ (with $k=2$, $p(u)=u$, $C=0.3$), and verified the vanishing
claim directly for all $(m,\ell)$ with $0\le m,\ell\le4$ in the same
setting: every pair with $(k+1)\nmid(m-\ell)$ returned zero to machine
precision, and $M_{3,0},M_{0,3}$ agreed with~\eqref{eq:54} and with each other's
conjugate.
\end{remark}

\begin{remark}[Scope]
Theorem~\ref{thm:vanishing} answers the second half of the complex-power
question raised in Section~\ref{sec:open}: most $(m,\ell)$ pairs
vanish identically, and the survivors reduce to the same boundary-only
mechanism as the real moments. The full combinatorial formula for general
surviving $(m,\ell)$ (not just $\ell=0$) is a straightforward but longer
extension of the same argument, which we do not spell out here. The
comparison with the complex-weighted harmonic moments of quadrature-domain
theory~\cite{GustafssonTkachev2009} -- which use a different weight and a
different (holomorphic) measure -- remains open.
\end{remark}

\section{Classification: projections with a radial Jacobian}
\label{sec:classification}

Theorem~\ref{thm:resonant} characterized resonance as pointwise
$v$-independence of $n_z$. By Lemma~\ref{lem:factorization} this says
something about the projection alone: its Jacobian depends only on the
distance from the axis, never on the azimuth. This section answers the
corresponding classification question in full.

The surface class of Definition~\ref{def:surface} fixes the base term at
azimuthal frequency $1$ and lets only the perturbation frequency $k$ vary.
This is not necessary, and the moment formulas of the preceding sections
give no reason to stop there. We therefore ask which analytic pairs share
the property that made those formulas possible: radial symmetry of the
harmonic Jacobian.

A closely related modern question is to describe harmonic mappings with a
\emph{prescribed} Jacobian: in the unit disk, Graf and Nikitin obtained a
realizability criterion for positive smooth Jacobians and described the
corresponding fixed-Jacobian families~\cite{GrafNikitin2023}. Our constraint
is different in kind --- we do not prescribe $J_F$, but require only that it
be radial.

\begin{definition}[Two-monomial resonant surface]
\label{def:two-monomial}
For $m,n\in\mathbb Z^{+}$, $C\in\mathbb R$, and $p,h\in C^{1}([0,1])$,
$p\ge0$, set
\[
\mathbf r(u,v)=
\begin{pmatrix}
p(u)^{m}\cos(mv)+C\,p(u)^{n}\sin(nv)\\[2pt]
p(u)^{m}\sin(mv)+C\,p(u)^{n}\cos(nv)\\[2pt]
h(u)
\end{pmatrix},
\qquad (u,v)\in D .
\]
Equivalently, with $w:=p(u)e^{iv}$ and $\zeta:=x+iy$,
\begin{equation}
\zeta=w^{m}+iC\,\overline{w}^{\,n}, \label{eq:25}
\end{equation}
which is the harmonic mapping $F=\Phi+\overline{\Psi}$ with
$\Phi(w)=w^{m}$ and $\Psi(w)=-iCw^{n}$. The case $m=1$, $n=k$ is the
family of Definition~\ref{def:surface}.
\end{definition}

\begin{theorem}[Two-frequency resonance]
\label{thm:two-monomial}
On the surface of Definition~\ref{def:two-monomial}, $n_z$ is pointwise
$v$-independent for every $m,n\in\mathbb Z^{+}$ and every $C\in\mathbb R$,
with
\begin{equation}
n_z=m^{2}p^{2m-1}p'-n^{2}C^{2}p^{2n-1}p'
=\frac{d}{du}\Big(\tfrac{m}{2}p^{2m}-\tfrac{n}{2}C^{2}p^{2n}\Big).
\label{eq:26}
\end{equation}
Consequently the $z$-flux is closed and boundary-only:
\begin{equation}
I^{(m,n)}_{e_z}
=\pi m\big(p(1)^{2m}-p(0)^{2m}\big)
-\pi nC^{2}\big(p(1)^{2n}-p(0)^{2n}\big). \label{eq:27}
\end{equation}
For $m=1$, $n=k$ this recovers Theorem~\ref{thm:general} exactly.
\end{theorem}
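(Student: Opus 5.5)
The plan is to apply Lemma~\ref{lem:factorization} directly, with $\Phi(w)=w^{m}$ and $\Psi(w)=-iCw^{n}$. Both are entire, so the lemma's hypotheses hold on any disk containing $\{|w|\le\max p\}$. The lemma then gives
\[
n_z=p\,p'\,J_F(w),\qquad J_F(w)=|mw^{m-1}|^{2}-|{-inC}w^{n-1}|^{2}=m^{2}|w|^{2m-2}-n^{2}C^{2}|w|^{2n-2}.
\]
Since $|w|=p(u)$, this depends on $u$ alone. This is already the $v$-independence, valid for all $m,n\in\mathbb Z^{+}$ and all real $C$, with no resonance condition to impose, because both terms of $J_F$ are moduli of monomials. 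Multiplying through by $pp'$ gives the first expression in \eqref{eq:26}, namely $m^{2}p^{2m-1}p'-n^{2}C^{2}p^{2n-1}p'$.

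For the exact-derivative form I would use $p^{j}p'=\tfrac{1}{j+1}(p^{j+1})'$ with $j=2m-1$ and $j=2n-1$. This gives $m^{2}\cdot\tfrac{1}{2m}(p^{2m})'=\tfrac m2(p^{2m})'$, and the second term transforms in the same way. One small point needs care: for $m=1$ or $n=1$ the factor $p^{2m-2}$ equals $1$, and for $p\ge0$ with $p\in C^{1}$ all the powers involved are $C^{1}$. So the chain rule is legitimate even where $p$ vanishes.

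For the flux, I would integrate the $v$-independent $n_z$ over $v\in[0,2\pi]$, which contributes a factor $2\pi$. The fundamental theorem of calculus in $u$ then yields $2\pi\big[\tfrac m2p^{2m}-\tfrac n2C^{2}p^{2n}\big]_0^1$, which is \eqref{eq:27}.

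For the consistency check at $m=1$, $n=k$, I would compare with Theorem~\ref{thm:general}. Definition~\ref{def:two-monomial} with $m=1$ has perturbation $Cp^{k}$, that is, $f=Cp^{k}$, so $k(f(1)^2-f(0)^2)=kC^{2}\Delta p^{2k}$, and the two formulas agree. Theorem~\ref{thm:resonant} in this case gives the same $n_z=pp'(1-k^{2}C^{2}p^{2k-2})$.

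The only potential obstacle is confirming that the parameterization in Definition~\ref{def:two-monomial} really equals $w^{m}+iC\bar w^{\,n}$, so that the lemma applies with the stated $\Psi$. To check this I would expand $iCp^{n}e^{-inv}=Cp^{n}\sin(nv)+iCp^{n}\cos(nv)$ and match its real and imaginary parts against the $x$- and $y$-components. Everything else is routine.
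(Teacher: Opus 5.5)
Your proof is correct. It reaches \eqref{eq:26} by a different route from the paper's.

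The paper's proof is a direct coordinate computation. It writes out $x_u,y_u,x_v,y_v$ from Definition~\ref{def:two-monomial} and observes that the mixed products in $mv$ and $nv$ cancel in $x_uy_v-y_ux_v$. What remains is $m^2p^{2m-1}p'(\cos^2mv+\sin^2mv)-n^2C^2p^{2n-1}p'(\sin^2nv+\cos^2nv)$.

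You instead specialise Lemma~\ref{lem:factorization} to $\Phi=w^m$, $\Psi=-iCw^n$. The cancellation of cross terms then comes from the general fact that the $\Phi$--$\Psi$ mixed term in $\overline{\zeta_u}\zeta_v$ is real. The $v$-independence is read off from $J_F=m^2|w|^{2m-2}-n^2C^2|w|^{2n-2}$ with $|w|=p$.

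Your version is shorter and shows why no resonance condition is needed: each of $|\Phi'|^2$ and $|\Psi'|^2$ is separately radial, which is exactly the mechanism of branch (i) in Theorem~\ref{thm:classification}. Its cost is that it depends on the lemma and on identifying the real parameterisation with $w^m+iC\bar w^{\,n}$. You rightly single out that identification and check it.

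The paper's computation is self-contained and elementary. It also serves as an independent check of the lemma in this case, which the paper notes is a special case of it (Remark~\ref{rem:lemma-scope}).

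The exact-derivative form, the flux integration and the comparison with Theorem~\ref{thm:general} under $f=Cp^k$ coincide with the paper's.
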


\begin{proof}
Direct computation from Definition~\ref{def:two-monomial}:
$x_u=mp^{m-1}p'\cos(mv)+nCp^{n-1}p'\sin(nv)$,
$y_u=mp^{m-1}p'\sin(mv)+nCp^{n-1}p'\cos(nv)$,
$x_v=-mp^{m}\sin(mv)+nCp^{n}\cos(nv)$,
$y_v=mp^{m}\cos(mv)-nCp^{n}\sin(nv)$. In $n_z=x_uy_v-y_ux_v$ the mixed
products in $mv$ and $nv$ occur with opposite signs in $x_uy_v$ and
$y_ux_v$ and cancel term by term, leaving
$m^{2}p^{2m-1}p'(\cos^{2}mv+\sin^{2}mv)
-n^{2}C^{2}p^{2n-1}p'(\sin^{2}nv+\cos^{2}nv)$, which is~\eqref{eq:26}; the
antiderivative form follows from $p^{2j-1}p'=\tfrac1{2j}(p^{2j})'$.
Integrating the now $v$-free $n_z$ over $v\in[0,2\pi]$ gives $2\pi n_z(u)$,
and Newton--Leibniz on~\eqref{eq:26} gives~\eqref{eq:27}.
\end{proof}

\begin{remark}[Verification]
\label{rem:two-monomial-check}
Verified symbolically (the cross terms in $x_uy_v-y_ux_v$ cancel
identically, independently of $p$) and numerically for the pairs
$(1,2)$, $(2,3)$, $(1,3)$, $(3,2)$, $(2,2)$, $(1,1)$: $n_z$ is constant
in $v$ to machine precision in every case.
\end{remark}
Write the derivative coefficients as
\[
\Phi'(w)=\sum_{j\ge1}\alpha_j w^{\,j-1},\qquad
\Psi'(w)=\sum_{j\ge1}\beta_j w^{\,j-1},\qquad
\alpha_j:=j\,a_j,\quad \beta_j:=j\,b_j,
\]
with supports $A:=\{j\ge1:\alpha_j\neq0\}$ and
$B:=\{j\ge1:\beta_j\neq0\}$. The constants $a_0,b_0$ never enter $J_F$
and are suppressed throughout.

\begin{theorem}[Resonance criterion]
\label{thm:criterion}
Let $p\in C^{1}([0,1])$, $p\ge0$, be nonconstant. Then $n_z$ is pointwise
$v$-independent if and only if
\begin{equation}
\alpha_j\bar\alpha_\ell=\beta_j\bar\beta_\ell
\qquad\text{for all } j\neq\ell,\ j,\ell\ge1. \label{eq:28}
\end{equation}
\end{theorem}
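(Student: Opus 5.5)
By Lemma~\ref{lem:factorization}, $n_z=p(u)p'(u)\,J_F(p(u)e^{iv})$ for every analytic pair. The plan is to reduce $v$-independence of $n_z$ to radiality of $J_F$, and then read radiality off the Fourier expansion of $J_F$ on circles. Throughout, $J_F$ is real-analytic in $(r,v)$, since $\Phi,\Psi$ are analytic on a disk containing $\{|w|\le\max p\}$.

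\emph{Step 1: reduction to radiality.} Because $p$ is $C^1$, nonnegative and nonconstant, the set $U=\{u:\ p(u)p'(u)\neq0\}$ is open and nonempty. On $U$, $v$-independence of $n_z$ is equivalent to $v$-independence of $J_F(re^{iv})$ for every $r\in p(U)$. The image $p(U)$ contains an open interval $(r_0,r_1)$ with $r_0>0$. For the converse direction, note that if $J_F$ is radial then $n_z=pp'\,J_F(p)$ is $v$-free everywhere, including where $pp'=0$, since there $n_z\equiv0$. So the criterion reduces to: $J_F(re^{iv})$ is independent of $v$ for all $r$ in some open interval if and only if \eqref{eq:28} holds.

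\emph{Step 2: Fourier expansion.} Expand on $|w|=r$:
\[
|\Phi'(w)|^2-|\Psi'(w)|^2=\sum_{j,\ell\ge1}\big(\alpha_j\bar\alpha_\ell-\beta_j\bar\beta_\ell\big)\,r^{j+\ell-2}e^{i(j-\ell)v}.
\]
The double series converges absolutely and uniformly on the circle for $r<R$, so it may be grouped by frequency $d=j-\ell$. The $d$-th Fourier coefficient is
\[
c_d(r)=\sum_{\ell\ge1,\ \ell+d\ge1}\big(\alpha_{\ell+d}\bar\alpha_\ell-\beta_{\ell+d}\bar\beta_\ell\big)\,r^{2\ell+d-2}.
\]
Radiality on an interval means $c_d(r)=0$ for all $d\neq0$ and all $r\in(r_0,r_1)$.

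\emph{Step 3: coefficient extraction.} For fixed $d$, the function $c_d(r)$ is $r^{d-2}$ (or $r^{|d|-2}$) times a power series in $r^2$ with coefficients $\gamma_{\ell}=\alpha_{\ell+d}\bar\alpha_\ell-\beta_{\ell+d}\bar\beta_\ell$, convergent for $r<R$. A power series that vanishes on an open interval vanishes identically by the identity theorem, so every $\gamma_\ell=0$. This is exactly $\alpha_j\bar\alpha_\ell=\beta_j\bar\beta_\ell$ for $j-\ell=d\neq0$. Conversely, \eqref{eq:28} kills every $c_d$ with $d\neq0$, so $J_F=c_0(r)$ is radial.

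\emph{Main obstacle.} I expect the delicate point to be the justification in Steps~1 and~3 that vanishing on the range of a merely $C^1$ profile suffices. This needs $p(U)$ to contain a genuine interval, which follows from continuity and nonconstancy of $p$. It also needs the rearrangement of the double series by frequency, which is legitimate by absolute convergence within the disk of analyticity.
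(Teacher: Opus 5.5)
Your proof is correct and takes essentially the same route as the paper. You factor $n_z=pp'J_F$, pass to a subinterval where $pp'\neq0$ so that $p$ sweeps a nondegenerate interval in $(0,\infty)$, group the expansion of $J_F$ by frequency $d=j-\ell$, and kill each off-diagonal coefficient $c_d$ by the identity theorem for power series, with the regrouping justified by absolute convergence (a point the paper leaves implicit).
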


\begin{proof}
By Lemma~\ref{lem:factorization}, $n_z=pp'\,J_F(w)$, and the prefactor
$pp'$ does not depend on $v$. Expanding the squared moduli,
\[
J_F(w)=\sum_{j,\ell\ge1}
\big(\alpha_j\bar\alpha_\ell-\beta_j\bar\beta_\ell\big)
p^{\,j+\ell-2}e^{i(j-\ell)v}.
\]
\emph{Sufficiency.} Under~\eqref{eq:28} every term with $j\neq\ell$ vanishes,
leaving $J_F=\sum_{j\ge1}(|\alpha_j|^{2}-|\beta_j|^{2})p^{2j-2}$, a
function of $p(u)$ alone; hence $n_z$ is $v$-independent, for every
admissible profile.

\emph{Necessity.} Since $p$ is nonconstant there is $u_0$ with
$p'(u_0)\neq0$; then $p$ is strictly monotone near $u_0$, and after
shrinking to a subinterval $J$ we may assume $p'\neq0$ and $p>0$ on $J$
(a monotone $p\ge0$ vanishes at most once on $J$). There $pp'\neq0$, so
$v$-independence of $n_z$ forces $v$-independence of $J_F$, and
$I:=p(J)$ is a nondegenerate interval in $(0,\infty)$.

Put $\gamma_{j\ell}:=\alpha_j\bar\alpha_\ell-\beta_j\bar\beta_\ell$, so
$\gamma_{\ell j}=\overline{\gamma_{j\ell}}$ and $J_F$ is real. Grouping
by frequency $d:=j-\ell$, the coefficient of $e^{idv}$ with $d\neq0$ is
\[
c_d(p)=\sum_{\ell\ge\max(1,1-d)}\gamma_{\ell+d,\,\ell}\;p^{\,2\ell+d-2}.
\]
The exponents $2\ell+d-2$ are pairwise distinct non-negative integers, so
$c_d$ is a convergent power series in the real variable $p$,
real-analytic on $I$; vanishing on the nondegenerate interval $I$ forces
every coefficient to vanish. Hence $\gamma_{j\ell}=0$ for all
$j\neq\ell$, which is~\eqref{eq:28}.
\end{proof}

\begin{remark}[One profile suffices]
\label{rem:one-profile}
The necessity argument uses a single nonconstant $p$, not a family: the
exponents occurring in $c_d$ are already distinct, so linear independence
of distinct powers on one nondegenerate interval does the whole job.
\end{remark}

\begin{theorem}[Classification]
\label{thm:classification}
Condition~\eqref{eq:28} holds if and only if one of the following occurs.
\begin{enumerate}
\item[\textnormal{(i)}] \textbf{Two-monomial family.} $|A|\le1$ and
$|B|\le1$, i.e.\ $\Phi(w)=a_0+a_mw^{m}$ and $\Psi(w)=b_0+b_nw^{n}$. Then
$J_F=|\alpha_m|^{2}p^{2m-2}-|\beta_n|^{2}p^{2n-2}$. This is
Definition~\ref{def:two-monomial} up to normalisation. The one-frequency
zero-Jacobian case $A=B=\{m\}$ with $|\alpha_m|=|\beta_m|$ is contained
here rather than in branch (iii), which is stated for $|S|\ge2$.

\item[\textnormal{(ii)}] \textbf{Shared-support branch.}
$A=B=\{m,n\}$ with $m\neq n$ and, writing
$\tau:=\alpha_m/\beta_m\in\mathbb C^{*}$ and $t:=|\tau|$,
\[
\beta_n=\bar\tau\,\alpha_n,\qquad t\neq1 .
\]
Then
\begin{equation}
J_F=|\alpha_m|^{2}\big(1-t^{-2}\big)p^{2m-2}
   +|\alpha_n|^{2}\big(1-t^{2}\big)p^{2n-2}, \label{eq:29}
\end{equation}
whose two coefficients are nonzero and of opposite sign.

\item[\textnormal{(iii)}] \textbf{Degenerate case.} $A=B=:S$ with
$|S|\ge2$ and $\alpha_j=\tau\beta_j$ for all $j\in S$ with a single
unimodular $\tau$. Then $J_F\equiv0$ and the $xy$-projection collapses
onto a straight line.
\end{enumerate}
\end{theorem}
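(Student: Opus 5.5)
The plan is to treat \eqref{eq:28} as a purely combinatorial--algebraic condition on the two coefficient sequences $(\alpha_j)$ and $(\beta_j)$. I will first pin down the supports $A,B$, then the ratios of the coefficients on the common support, and finally check each branch in the converse direction. The profile $p$ plays no further role, since Theorem~\ref{thm:criterion} has already reduced everything to \eqref{eq:28}.

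\textbf{Support step.} Suppose $j\neq\ell$ both lie in $A$. Then $\alpha_j\bar\alpha_\ell\neq0$, so \eqref{eq:28} forces $\beta_j\bar\beta_\ell\neq0$, hence $j,\ell\in B$. So $|A|\ge2$ implies $A\subseteq B$, in particular $|B|\ge2$. By the symmetric argument, $|B|\ge2$ implies $B\subseteq A$. This leaves exactly two alternatives:
\begin{itemize}
\item either $|A|\le1$ and $|B|\le1$;
\item or $A=B=:S$ with $|S|\ge2$.
\end{itemize}
In the first alternative, every product $\alpha_j\bar\alpha_\ell$ and $\beta_j\bar\beta_\ell$ with $j\neq\ell$ vanishes, so \eqref{eq:28} holds trivially. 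This is branch (i), and it includes $A=B=\{m\}$. The formula for $J_F$ in (i) is the diagonal part of the expansion used in the proof of Theorem~\ref{thm:criterion}.

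\textbf{Ratio step (the main bookkeeping).} On $S$ put $\tau_j:=\alpha_j/\beta_j\in\mathbb C^{*}$. Dividing \eqref{eq:28} by $\beta_j\bar\beta_\ell\neq0$ gives $\tau_j\bar\tau_\ell=1$ for all $j\neq\ell$ in $S$.

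\emph{Case $|S|\ge3$.} Take three distinct indices $j,\ell,r\in S$. From the pairs $(j,\ell)$ and $(r,\ell)$ we get $\tau_j=1/\bar\tau_\ell=\tau_r$. Hence all $\tau_j$ equal a common value $\tau$. The relation $\tau\bar\tau=1$ then forces $|\tau|=1$, which is branch (iii).

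\emph{Case $S=\{m,n\}$.} The only constraint is $\tau_n=1/\bar\tau_m$. With $\tau:=\tau_m$ this reads $\beta_n=\bar\tau\alpha_n$.
\begin{itemize}
\item If $t=|\tau|=1$, then $\tau_n=\tau_m$ and we are again in (iii).
\item If $t\neq1$, we are in (ii). The diagonal coefficients are $|\alpha_m|^2-|\beta_m|^2=|\alpha_m|^2(1-t^{-2})$ and $|\alpha_n|^2-|\beta_n|^2=|\alpha_n|^2(1-t^2)$, which gives \eqref{eq:29}. Both are nonzero because $\alpha_m,\alpha_n\neq0$ and $t\neq1$. They have opposite signs because $1-t^{-2}$ and $1-t^{2}$ do.
\end{itemize}

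\textbf{Degenerate branch and converse.} In (iii) we have $\Phi'=\tau\Psi'$ with $|\tau|=1$, so $J_F=(|\tau|^2-1)|\Psi'|^2\equiv0$. Integrating, $\Phi=\tau\Psi+c$ for a constant $c$. Write $\tau=e^{2i\theta}$. Then
\[
e^{-i\theta}(\zeta-c)=e^{i\theta}\Psi+\overline{e^{i\theta}\Psi}=2\operatorname{Re}\big(e^{i\theta}\Psi\big)\in\mathbb R,
\]
so the projection lies on the line $c+e^{i\theta}\mathbb R$.

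For the converse I check \eqref{eq:28} in each branch directly:
\begin{itemize}
\item (i) is trivial, as noted in the support step.
\item In (ii), the only off-diagonal pairs are $(m,n)$ and $(n,m)$, and $\beta_m\bar\beta_n=(\alpha_m/\tau)\,\tau\bar\alpha_n=\alpha_m\bar\alpha_n$.
\item In (iii), $\alpha_j\bar\alpha_\ell=|\tau|^2\beta_j\bar\beta_\ell=\beta_j\bar\beta_\ell$.
\end{itemize}
I expect the only real care to be needed in the ratio step. One must make sure that $t=1$ in the two-element case is routed into (iii) and not left in (ii). One must also make sure that the one-frequency zero-Jacobian case is assigned to (i), as the statement specifies.
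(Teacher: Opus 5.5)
Your proposal is correct and follows the paper's proof essentially step for step. You use the same support reduction to either $|A|,|B|\le1$ or $A=B=S$, the same ratios $\tau_j$ satisfying $\tau_j\bar\tau_\ell=1$, the same split into $|S|\ge3$ versus $|S|=2$ with $t=1$ routed into (iii), and the same $e^{i\theta}$-rotation argument for the line; the only cosmetic difference is that you check the converse branch by branch, whereas the paper notes once that $\tau_j\bar\tau_\ell=1$ restores \eqref{eq:28}.
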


\begin{proof}
\emph{Reduction of the supports.} Suppose $|A|\ge2$ and fix $j_0\in A$. For
every $j\in A$ with $j\neq j_0$ we have
$\beta_j\bar\beta_{j_0}=\alpha_j\bar\alpha_{j_0}\neq0$, so $\beta_j\neq0$,
i.e.\ $j\in B$; and $j_0\in B$ follows from any such relation as well. Hence
$A\subseteq B$, so $|B|\ge2$, and the symmetric argument gives
$B\subseteq A$. So either $\max(|A|,|B|)\le1$, or $A=B=:S$
with $|S|\ge2$.

\emph{Case $\max(|A|,|B|)\le1$.} Every product $\alpha_j\bar\alpha_\ell$
and $\beta_j\bar\beta_\ell$ with $j\neq\ell$ has a vanishing factor, so
\eqref{eq:28} holds automatically; this is (i), and the stated $J_F$ is the
diagonal part of the expansion above.

\emph{Case $A=B=S$.} All $\alpha_j,\beta_j$ with $j\in S$ are nonzero, so
$\tau_j:=\alpha_j/\beta_j\in\mathbb C^{*}$ is defined on $S$. Substituting
$\alpha_j=\tau_j\beta_j$ into~\eqref{eq:28} gives, for $j\neq\ell$ in $S$,
$\tau_j\bar\tau_\ell\,\beta_j\bar\beta_\ell=\beta_j\bar\beta_\ell$, i.e.
\[
\tau_j\bar\tau_\ell=1\qquad\text{for all distinct }j,\ell\in S,
\]
and conversely this restores~\eqref{eq:28}.

If $|S|\ge3$, choose distinct $j,\ell,k\in S$: from
$\tau_j\bar\tau_\ell=1$ and $\tau_j\bar\tau_k=1$ we get
$\bar\tau_\ell=\bar\tau_k$, so all $\tau_j$ coincide, say $=\tau$, and
the relation reads $|\tau|^{2}=1$. The diagonal part then gives
$J_F=\sum_{j\in S}(|\tau|^{2}-1)|\beta_j|^{2}p^{2j-2}=0$, which is (iii).

If $|S|=2$, say $S=\{m,n\}$, the single relation $\tau_m\bar\tau_n=1$
gives $\tau_n=1/\bar\tau_m$, i.e.\ $\beta_n=\bar\tau\alpha_n$ with
$\tau:=\tau_m$. Substituting $\beta_m=\alpha_m/\tau$ and
$\beta_n=\bar\tau\alpha_n$ into the diagonal part yields~\eqref{eq:29}. If $t=1$
both coefficients vanish and we are in (iii); if $t\neq1$ then
$1-t^{-2}$ and $1-t^{2}$ are nonzero of opposite sign, giving (ii).

\emph{Degeneracy in (iii).} From $\alpha_j=\tau\beta_j$ for $j\ge1$ we get
$\Phi=\tau\Psi+\text{const}$; writing $\tau=e^{2i\theta}$,
\[
\zeta=e^{2i\theta}\Psi(w)+\overline{\Psi(w)}+\text{const}
=e^{i\theta}\cdot2\operatorname{Re}\big(e^{i\theta}\Psi(w)\big)+\text{const},
\]
so the image lies on the line through the constant with direction
$e^{i\theta}$.
\end{proof}

\begin{proposition}[Branch (ii) is an affine orbit]
\label{prop:affine-orbit}
The origin-preserving real-linear map
$\zeta\mapsto\alpha\zeta+\beta\bar\zeta$ acts on the canonical
decomposition by
\[
\begin{pmatrix}\Phi'\\ \Psi'\end{pmatrix}\longmapsto
\begin{pmatrix}\alpha&\beta\\ \bar\beta&\bar\alpha\end{pmatrix}
\begin{pmatrix}\Phi'\\ \Psi'\end{pmatrix},
\qquad
J_F\longmapsto\big(|\alpha|^2-|\beta|^2\big)J_F ,
\]
so it preserves radial symmetry of the Jacobian. Branch (ii) of
Theorem~\ref{thm:classification} is exactly the image of branch (i) under
this action, the correspondence being $\tau=\alpha/\bar\beta$; combined with
the dilatation rotation $\Psi'\mapsto e^{i\xi}\Psi'$ it exhausts branch (ii).
\end{proposition}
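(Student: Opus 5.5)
The plan has three parts. First, verify by direct computation that the map $\zeta\mapsto\alpha\zeta+\beta\bar\zeta$ acts on the decomposition as stated. Second, use the Jacobian scaling to see that radial symmetry is preserved. Third, match branch (ii) with the image of branch (i), and show the image exhausts branch (ii) once the rotation $\Psi'\mapsto e^{i\xi}\Psi'$ is allowed.

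\textbf{Action on the decomposition.} Writing $\zeta=\Phi+\overline{\Psi}$, we have
\[
\alpha\zeta+\beta\bar\zeta=(\alpha\Phi+\beta\Psi)+\overline{(\bar\beta\Phi+\bar\alpha\Psi)}.
\]
The new analytic part is $\alpha\Phi+\beta\Psi$ and the new co-analytic part is $\bar\beta\Phi+\bar\alpha\Psi$. Differentiating gives the matrix action on $(\Phi',\Psi')$. For the Jacobian, expand $|\alpha\Phi'+\beta\Psi'|^2-|\bar\beta\Phi'+\bar\alpha\Psi'|^2$. The cross terms are $2\operatorname{Re}(\alpha\bar\beta\Phi'\overline{\Psi'})$ in both squares, so they cancel. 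What remains is $(|\alpha|^2-|\beta|^2)(|\Phi'|^2-|\Psi'|^2)$. Since this is a constant multiple of $J_F$, radial symmetry of the Jacobian is preserved. Equivalently, by Theorem~\ref{thm:criterion}, condition \eqref{eq:28} is preserved.

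\textbf{Image of branch (i) lies in branch (ii).} Take a branch (i) pair with $m\neq n$: $\Phi'=\alpha_m w^{m-1}$, $\Psi'=\beta_n w^{n-1}$. Its image has coefficients:
\begin{itemize}
\item $\alpha'_m=\alpha\alpha_m$ and $\alpha'_n=\beta\beta_n$ in the analytic part;
\item $\beta'_m=\bar\beta\alpha_m$ and $\beta'_n=\bar\alpha\beta_n$ in the co-analytic part.
\end{itemize}
If $\alpha\beta\neq0$, both supports equal $\{m,n\}$. Then
\[
\tau:=\alpha'_m/\beta'_m=\alpha/\bar\beta,
\]
and one checks $\bar\tau\alpha'_n=(\bar\alpha/\beta)\beta\beta_n=\bar\alpha\beta_n=\beta'_n$. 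The condition $t=|\alpha|/|\beta|\neq1$ is exactly invertibility of the real-linear map, $|\alpha|^2\neq|\beta|^2$. So the image lies in branch (ii), with $\tau=\alpha/\bar\beta$ as claimed.

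\textbf{Branch (ii) is exhausted.} Start from branch (ii) data $(\alpha'_m,\alpha'_n,\tau)$ with $\beta'_m=\alpha'_m/\tau$ and $\beta'_n=\bar\tau\alpha'_n$. We need $\alpha,\beta$ with $\alpha/\bar\beta=\tau$, together with $\alpha_m,\beta_n$, solving $\alpha\alpha_m=\alpha'_m$, $\bar\beta\alpha_m=\beta'_m$, $\beta\beta_n=\alpha'_n$, $\bar\alpha\beta_n=\beta'_n$.
\begin{itemize}
\item Fix any $\beta\neq0$ and set $\alpha=\tau\bar\beta$.
\item The first two equations are then consistent and give $\alpha_m=\alpha'_m/\alpha$.
\item From the third equation, $\beta_n=\alpha'_n/\beta$.
\item The fourth equation then reads $\bar\tau\beta\,\alpha'_n/\beta=\bar\tau\alpha'_n$, which holds automatically.
\end{itemize}
So branch (ii) is hit by the affine orbit alone. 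The rotation $\Psi'\mapsto e^{i\xi}\Psi'$ is only needed if one insists on the normal form of Definition~\ref{def:two-monomial}, where $\alpha_m=1$ and $\beta_n=-iC$ with $C$ real. It absorbs the phase of $\beta_n$, and the modulus is absorbed by the free scaling of $\beta$.

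The main obstacle is this last normalisation: checking that, after fixing real $C$, the count of free parameters still matches. I would handle it by first absorbing $|\beta|$ and the phase of $\beta$ through $\beta\mapsto\lambda\beta$, $\alpha\mapsto\lambda'\alpha$ while keeping $\alpha/\bar\beta=\tau$, and only then invoking the rotation. Finally, I would check that no branch (ii) pair arises from a branch (i) pair with $m=n$. In that case the image has a single frequency, so it cannot land in branch (ii), which requires $m\neq n$.
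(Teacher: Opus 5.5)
Your proposal is correct and follows essentially the same route as the paper. Both use the same coefficient computation, yielding $\tau=\alpha/\bar\beta$ and $\beta'_n=\bar\tau\alpha'_n$, and the same converse via $\alpha=\tau\bar\beta$. The only difference is that the paper fixes $|\beta|=|t^2-1|^{-1/2}$ and $\arg\beta=-\arg\tau$ (so that $|\alpha|^2-|\beta|^2=\pm1$), whereas you leave $\beta\neq0$ free and rightly observe that the rotation is redundant for branch (i) with complex coefficients.

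Your closing ``obstacle'' is not one. Normalising $\alpha_m$ already determines $\alpha$, hence $\beta=\bar\alpha/\bar\tau$, after which $|\beta_n|$ simply fixes $n|C|$ and only its phase is left for $e^{i\xi}$. Note also that in the paper's notation the normal form has $\alpha_m=m$ and $\beta_n=-inC$, not $1$ and $-iC$.
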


\begin{proof}
Applying the matrix to $\Phi'=a\,w^{m-1}$, $\Psi'=b\,e^{i\xi}w^{n-1}$ gives
$\alpha_m=\alpha a$, $\alpha_n=\beta e^{i\xi}b$, $\beta_m=\bar\beta a$,
$\beta_n=\bar\alpha e^{i\xi}b$. Then $\tau=\alpha_m/\beta_m=\alpha/\bar\beta$
and $\bar\tau\alpha_n=(\bar\alpha/\beta)\beta e^{i\xi}b=\beta_n$, which is
condition~(N3); conversely, given $\tau$ with $t\neq1$, setting
$|\beta|=|t^2-1|^{-1/2}$ with $\arg\beta=-\arg\tau$ and $\alpha=\tau\bar\beta$ satisfies
$|\alpha|^2-|\beta|^2=\operatorname{sign}(t^2-1)$ and reproduces the given branch (ii) pair; the map is orientation-preserving for $t>1$ and orientation-reversing for $0<t<1$. The
transformation law for $J_F$ is a direct computation.
\end{proof}

The Jacobian-preserving orbit itself is not new: equal-Jacobian harmonic
maps are already known to be connected by an affine transformation of the
canonical pair, together with a rotation of the
dilatation~\cite{ConstantinMartin2017}, and Nikitin gives an explicit
parametrization of the whole family by $(\alpha,\beta,z_0,C)$, including a
translation~\cite{Nikitin2025}. Both results carry the standing hypothesis
that the mappings be sense-preserving, so neither covers a Jacobian that
changes sign --- as it does on branch (ii) below, where the two coefficients
have opposite signs --- nor the degenerate stratum $J_F\equiv0$. We also
exclude translations: they leave $J_F$ untouched but move the moments of
Sections~\ref{sec:Jor}--\ref{sec:higher}, which are taken about the origin. What follows is the
consequence of adding radiality.

\begin{corollary}[Normal form]
\label{cor:normal-form}
Modulo an additive constant in the target, every pair $(\Phi,\Psi)$ whose
Jacobian is radial and not identically zero is equivalent, under the
origin-preserving real-linear action of Proposition~\ref{prop:affine-orbit},
to a two-monomial pair. The additive constants $a_0,b_0$ never enter $J_F$
and are therefore invisible to the classification; they are not, however,
invisible to the moments of
Sections~\ref{sec:Jor}--\ref{sec:higher}, which are taken about the origin. The surfaces of
Definition~\ref{def:two-monomial} therefore constitute a complete set of
normal forms for the nondegenerate case.
\end{corollary}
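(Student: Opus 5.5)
The statement to prove is Corollary~\ref{cor:normal-form}. I would derive it directly from Theorem~\ref{thm:classification} and Proposition~\ref{prop:affine-orbit}, arguing branch by branch.

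\textbf{Step 1: place the pair in a branch.} Let $(\Phi,\Psi)$ have radial Jacobian with $J_F\not\equiv0$. Radiality means $n_z=pp'J_F$ is $v$-independent for a nonconstant profile. So Theorem~\ref{thm:criterion} gives condition~\eqref{eq:28}, and Theorem~\ref{thm:classification} places the pair in branch (i), (ii) or (iii). Branch (iii) has $J_F\equiv0$ and is excluded by hypothesis. The one-frequency zero-Jacobian subcase of (i) is excluded for the same reason. If the pair lies in branch (i), then after discarding $a_0,b_0$ it is already a two-monomial pair $a_mw^m+\overline{b_nw^n}$. It matches Definition~\ref{def:two-monomial} up to normalisation: a rescaling of $\Phi$ and a rotation of $\Psi$, both of which are realised by the diagonal real-linear maps $\zeta\mapsto\alpha\zeta$ together with $\Psi'\mapsto e^{i\xi}\Psi'$. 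If one of $A,B$ is empty, the pair is a monomial with $\Psi$ or $\Phi$ trivial, which is still of two-monomial type with $C=0$, or its conjugate.

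\textbf{Step 2: branch (ii).} For branch (ii), I would invoke the converse half of Proposition~\ref{prop:affine-orbit}. Given $\tau$ with $t\ne1$, choose $|\beta|=|t^2-1|^{-1/2}$, $\arg\beta=-\arg\tau$ and $\alpha=\tau\bar\beta$. Then $|\alpha|^2-|\beta|^2=\pm1\ne0$, so the matrix $\begin{pmatrix}\alpha&\beta\\ \bar\beta&\bar\alpha\end{pmatrix}$ is invertible. Its inverse is again of the same real-linear form, since the inverse of $\zeta\mapsto\alpha\zeta+\beta\bar\zeta$ is $\zeta\mapsto(\bar\alpha\zeta-\beta\bar\zeta)/(|\alpha|^2-|\beta|^2)$. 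Applying that inverse to the branch (ii) pair, and undoing the dilatation rotation, returns a pair with $\Phi'=aw^{m-1}$ and $\Psi'=be^{i\xi}w^{n-1}$. That is a two-monomial pair. I would also check that equivalence is symmetric, that the action preserves radiality by the transformation law $J_F\mapsto(|\alpha|^2-|\beta|^2)J_F$, and that it preserves non-vanishing of $J_F$.

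\textbf{Step 3: additive constants.} Note that $a_0,b_0$ only translate $\zeta$ and do not affect $\Phi',\Psi'$. This justifies the ``modulo an additive constant'' clause. I would then add the remark that translations do not commute with moments about the origin, which is a comment rather than a proof obligation.

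\textbf{Main obstacle.} The main point needing care is the bookkeeping of what ``equivalent'' allows. Proposition~\ref{prop:affine-orbit} uses the real-linear action combined with the dilatation rotation $\Psi'\mapsto e^{i\xi}\Psi'$. That rotation is not itself a target map, so I would state explicitly that the equivalence relation is generated by both operations. I would also check that branch (i) normalisation does not need the rotation when $C$ is allowed complex, or else include it, so that the final two-monomial pair has exactly the form $w^m+iC\bar w^n$.
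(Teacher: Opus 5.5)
Your proposal is correct and follows the route the paper intends. The corollary is read off from Theorem~\ref{thm:classification}, discarding branch (iii) and the zero-Jacobian subcase of (i), combined with the converse half of Proposition~\ref{prop:affine-orbit}, and the constants $a_0,b_0$ are dropped because they never enter $J_F$. Your bookkeeping concern is also well placed: the diagonal map $\zeta\mapsto\alpha\zeta$ multiplies $a_mb_n$ by $|\alpha|^2$ and so leaves its phase unchanged, hence reaching the exact form $w^m+iC\bar w^{\,n}$ with real $C$ does require the dilatation rotation, or equivalently a shift $v\mapsto v+\theta$ composed with a target rotation; inverting branch (ii) needs only the inverse real-linear map.
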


\begin{remark}[The group is known]
\label{rem:cm}
The pairs of harmonic mappings sharing a Jacobian were completely classified
by Constantin and Martín~\cite{ConstantinMartin2017}, whose result states
that any two such pairs differ precisely by a matrix of the above form with
$|\alpha|^2=1+|\beta|^2$, composed with a dilatation rotation. The content of
Theorem~\ref{thm:classification} is therefore not the existence of branch
(ii) --- which is a gauge copy of branch (i) --- but the statement that the
orbit of the two-monomial family exhausts the nondegenerate possibilities.
\end{remark}

\begin{remark}[Explicit dilatation]
\label{rem:dilatation}
For completeness: on branch (ii), writing $\rho:=\alpha_n/\alpha_m$ and
$\varsigma:=\rho\,w^{\,n-m}$, the dilatation is
$\omega=\Psi'/\Phi'=\tau^{-1}(1+t^{2}\varsigma)/(1+\varsigma)$, a M\"obius
transform of a monomial, whereas on branch (i) it is the monomial
$\omega=(\beta_n/\alpha_m)w^{\,n-m}$. In $|\Phi'|^2(1-|\omega|^2)$ the terms
linear in $\operatorname{Re}\varsigma$ cancel, recovering~\eqref{eq:29}.
\end{remark}

\begin{remark}[The mechanism, and what it corrects]
\label{rem:mechanism}
The two branches differ in \emph{where} the radial symmetry comes from.
On (i) each of $|\Phi'|^{2}$ and $|\Psi'|^{2}$ is separately radial,
there being no cross term at all. On (ii) neither is radial; their two
cross terms are equal, since
$\beta_m\bar\beta_n=(\alpha_m/\tau)(\tau\bar\alpha_n)
=\alpha_m\bar\alpha_n$, and annihilate in the difference. Only the pair
is radial, never a member of it --- which is why no condition on $\Phi$
or on $\Psi$ separately can capture this branch.

The smallest instance is $m=1$, $n=2$, $\alpha_1=\alpha_2=2$, $\tau=2$,
i.e.\ $\Phi(w)=2w+w^{2}$ and $\Psi(w)=w+2w^{2}$:
\[
x=3p\cos v+3p^{2}\cos2v,\qquad y=p\sin v-p^{2}\sin2v,
\]
with $J_F=3-12p^{2}$ and, for $p(u)=u$, $I_{e_z}=-3\pi$. By
Proposition~\ref{prop:affine-orbit} this is the image of the two-monomial
pair $\Phi(w)=w$, $\Psi(w)=w^{2}$ under $\zeta\mapsto2\zeta+\bar\zeta$,
and indeed $3-12p^{2}=3\,(1-4p^{2})$ is three times the Jacobian of the
latter.
\end{remark}

\begin{corollary}[Flux on the shared-support branch]
\label{cor:mobius-flux}
With $\Delta p^{\,s}:=p(1)^{s}-p(0)^{s}$, branch (ii) has closed,
boundary-only $z$-flux
\[
I_{e_z}=\pi\left[
\frac{|\alpha_m|^{2}\big(1-t^{-2}\big)}{m}\,\Delta p^{\,2m}
+\frac{|\alpha_n|^{2}\big(1-t^{2}\big)}{n}\,\Delta p^{\,2n}\right].
\]
\end{corollary}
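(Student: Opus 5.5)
The plan is to combine the radial form of the Jacobian on branch (ii), equation~\eqref{eq:29}, with the general flux computation of Theorem~\ref{thm:general}, which in its Green's-theorem form does not depend on monomiality. By Theorem~\ref{thm:classification}(ii) the defining condition~\eqref{eq:28} holds, so by Theorem~\ref{thm:criterion} $n_z$ is $v$-independent. Lemma~\ref{lem:factorization} then gives
\[
n_z=p\,p'\,J_F=|\alpha_m|^{2}\big(1-t^{-2}\big)p^{2m-1}p'+|\alpha_n|^{2}\big(1-t^{2}\big)p^{2n-1}p',
\]
after substituting $|w|=p(u)$ into~\eqref{eq:29}.

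The remaining steps are routine.
\begin{enumerate}
\item Integrate over $v\in[0,2\pi]$. Since nothing depends on $v$, this contributes only the factor $2\pi$.
\item Recognise each term as an exact $u$-derivative, using $p^{2j-1}p'=\tfrac{1}{2j}(p^{2j})'$ exactly as in the proof of Theorem~\ref{thm:two-monomial}.
\item Apply Newton--Leibniz on $[0,1]$. This gives
\[
2\pi\Big[\tfrac{|\alpha_m|^2(1-t^{-2})}{2m}\Delta p^{2m}+\tfrac{|\alpha_n|^2(1-t^{2})}{2n}\Delta p^{2n}\Big],
\]
which is the stated formula once the factor $2\pi\cdot\tfrac12=\pi$ is simplified.
\end{enumerate}
Boundary-only dependence is then automatic, as in Remark~\ref{rem:green}. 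The height $h$ never enters, because $n_z$ involves only the planar components.

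As a cross-check I would use Proposition~\ref{prop:affine-orbit}. It transports the two-monomial flux~\eqref{eq:27} multiplied by $|\alpha|^2-|\beta|^2$; with the normalisation $\alpha_j=j\,a_j$ this should reproduce the same coefficients. The example of Remark~\ref{rem:mechanism} serves as a numerical check. There $m=1$, $n=2$, $|\alpha_1|=|\alpha_2|=2$ and $t=2$, so the formula gives $\pi[4\cdot\tfrac34\cdot1+\tfrac{4\cdot(-3)}{2}\cdot1]=\pi(3-6)=-3\pi$, matching the stated value.

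I expect no real obstacle. The only delicate point is the hypothesis of Lemma~\ref{lem:factorization}, namely analyticity on a neighbourhood of $|w|\le\max p$ and $p\ge0$. For the polynomial pairs of branch (ii) this holds trivially. Everything else is bookkeeping of the coefficients $|\alpha_j|^2$ as opposed to $|a_j|^2$, which is why the factors $1/m$ and $1/n$ appear.
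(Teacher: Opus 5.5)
Your proposal is correct and is essentially the paper's own argument. It uses $n_z=pp'J_F$ with $J_F$ given by \eqref{eq:29}, so $n_z$ is $v$-independent and the $v$-integral contributes $2\pi$; each term $p^{2j-1}p'=\tfrac{1}{2j}(p^{2j})'$ is an exact derivative, Newton--Leibniz then gives the formula, and your $-3\pi$ check against Remark~\ref{rem:mechanism} is consistent. The opening appeal to Theorem~\ref{thm:general} is superfluous, since that theorem concerns the frequency-$(1,k)$ class rather than branch (ii), but none of the steps you actually carry out depends on it.
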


\begin{proof}
By~\eqref{eq:29} the product $n_z=pp'J_F$ is $v$-independent, so the $v$-integral
contributes $2\pi$; termwise $p^{2j-1}p'=\tfrac1{2j}(p^{2j})'$, so the
integrand is an exact derivative in $u$, and Newton--Leibniz gives the
formula.
\end{proof}

\begin{table}[htbp]
\centering
\small
\begin{tabular}{@{}llll@{}}
\toprule
Branch & Supports & Dilatation $\omega=\Psi'/\Phi'$ & $J_F$ \\
\midrule
Two-monomial & $|A|,|B|\le1$ & monomial & two-term, radial \\
Shared support & $A=B$, $|A|=2$, $t\neq1$ & M\"obius of a monomial
& affine copy of (i) \\
Degenerate & $A=B$, $\alpha_j=\tau\beta_j$, $|\tau|=1$ & unimodular
constant & $\equiv0$ (image is a line) \\
\bottomrule
\end{tabular}
\caption{The three classes of Theorem~\ref{thm:classification}. Radial
symmetry of $J_F$ arises separately in each factor on the first branch, and
only jointly on the second; for each fixed exponent pair the two branches
form one affine orbit.}
\end{table}

\begin{remark}[Status of the general-$\Phi,\Psi$ question]
\label{rem:open-status}
Theorem~\ref{thm:classification} settles the question of whether resonance
extends beyond monomials. It does, but only in the trivial sense: the extra
branch is an affine copy, and the parameter $t=|\tau|$ is the group
parameter, not a new degree of freedom. Modulo the affine action the answer
is negative, and that is the useful form of the statement --- the
two-monomial pairs are complete normal forms
(Corollary~\ref{cor:normal-form}).

Two matters are left open here. First, the moment theory of branch (ii):
whether the oriented moments of Sections~\ref{sec:Jor}--\ref{sec:higher} admit
boundary-only closed forms on it, and what replaces the critical value
$C^{*}$, is untouched, the existing derivations using the two-monomial
form of $n_z$ throughout. Second, $\Phi$ and $\Psi$ have been restricted
to analytic series in $w$, matching the disk-type domain $w=p(u)e^{iv}$
with $p\ge0$; admitting genuine Laurent series, on an annular domain
with $p$ bounded away from $0$, is a distinct question.
\end{remark}

\section{Affine covariance of the moments}
\label{sec:affine}

The resonant surface is one representative of a larger orbit. The origin-preserving real-linear map
$\zeta\mapsto\alpha\zeta+\beta\bar\zeta$, invertible when
$|\alpha|^2\neq|\beta|^2$, acts on the canonical decomposition
by $(\Phi',\Psi')\mapsto(\alpha\Phi'+\beta\Psi',\ \bar\beta\Phi'+\bar\alpha\Psi')$
and rescales the Jacobian,
\[
J_F\longmapsto\big(|\alpha|^2-|\beta|^2\big)J_F ,
\]
so it preserves radial symmetry of $J_F$ while in general leaving the
resonant family itself. It is therefore worth recording which of the
quantities above are intrinsic to the orbit and which belong to the
representative.

\begin{proposition}[Transformation law]
\label{prop:affine-law}
Under $\zeta\mapsto\alpha\zeta+\beta\bar\zeta$,
\[
I_{e_z}\longmapsto\big(|\alpha|^2-|\beta|^2\big)I_{e_z},\qquad
J_{\mathrm{or}}\longmapsto\big(|\alpha|^2-|\beta|^2\big)
\Big[\big(|\alpha|^2+|\beta|^2\big)J_{\mathrm{or}}
+2\operatorname{Re}\big(\alpha\bar\beta\,M_{2,0}\big)\Big].
\]
\end{proposition}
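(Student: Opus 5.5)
Write $\tilde\zeta:=\alpha\zeta+\beta\bar\zeta$ for the transformed projection, on the same parameter domain $D$. The plan has two ingredients. First, the transformed surface element is a constant multiple of the old one. Second, the transformed radial weight expands into the old weight plus a complex-weighted term of the kind handled in Section~\ref{sec:complex-weighted}.

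\emph{The surface element.} Since $n_z=\operatorname{Im}(\overline{\zeta_u}\zeta_v)$, as in the proof of Lemma~\ref{lem:factorization}, I will compute
\[
\overline{\tilde\zeta_u}\tilde\zeta_v=|\alpha|^2\overline{\zeta_u}\zeta_v+|\beta|^2\zeta_u\overline{\zeta_v}+\bar\alpha\beta\,\overline{\zeta_u}\,\overline{\zeta_v}+\alpha\bar\beta\,\zeta_u\zeta_v .
\]
The two mixed terms are complex conjugates, so their sum is real and drops out of the imaginary part. The term $|\beta|^2\zeta_u\overline{\zeta_v}$ contributes $-|\beta|^2 n_z$. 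Hence $\tilde n_z=(|\alpha|^2-|\beta|^2)\,n_z$ pointwise. This is exactly the Jacobian law of Proposition~\ref{prop:affine-orbit}, now read at the level of $(u,v)$ rather than through $J_F$. Integrating over $D$ gives the flux law immediately: $\tilde I_{e_z}=(|\alpha|^2-|\beta|^2)I_{e_z}$.

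\emph{The quadratic weight.} Next I expand
\[
|\tilde\zeta|^2=(\alpha\zeta+\beta\bar\zeta)(\bar\alpha\bar\zeta+\bar\beta\zeta)=(|\alpha|^2+|\beta|^2)|\zeta|^2+\alpha\bar\beta\,\zeta^2+\bar\alpha\beta\,\bar\zeta^{\,2}
=(|\alpha|^2+|\beta|^2)|\zeta|^2+2\operatorname{Re}(\alpha\bar\beta\,\zeta^2).
\]
I then multiply by $\tilde n_z=(|\alpha|^2-|\beta|^2)n_z$ and integrate over $D$. Because $n_z$ is real, the real part may be pulled outside the integral, so
\[
\iint_D\operatorname{Re}(\alpha\bar\beta\zeta^2)\,n_z\,du\,dv=\operatorname{Re}\big(\alpha\bar\beta M_{2,0}\big).
\]
Together with $\iint_D|\zeta|^2 n_z\,du\,dv=J_{\mathrm{or}}$, this yields the stated law for $J_{\mathrm{or}}$.

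\emph{Where care is needed.} No step is deep; the only subtlety is bookkeeping. The transformed moments must be understood as integrals over the same parameterisation $D$ with the transformed projection $\tilde\zeta$. The height $h$ is unchanged, and $n_z$ never depended on $h$ (Proposition~\ref{prop:nz}), so nothing else moves. The sign conventions in the cross terms also need checking: $\operatorname{Im}(\zeta_u\overline{\zeta_v})=-n_z$, and the mixed terms are conjugate pairs.

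Finally, I will note a consistency check with Theorem~\ref{thm:vanishing}. For the resonant family, $M_{2,0}$ vanishes unless $(k+1)\mid 2$, that is, unless $k=1$. So for $k\ge2$ the correction term drops out, and $J_{\mathrm{or}}$ simply scales by $(|\alpha|^2-|\beta|^2)(|\alpha|^2+|\beta|^2)=|\alpha|^4-|\beta|^4$.
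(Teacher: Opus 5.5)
Your proof is correct and follows the paper's argument. You rescale $n_z$ by $|\alpha|^2-|\beta|^2$, expand $|\alpha\zeta+\beta\bar\zeta|^2=(|\alpha|^2+|\beta|^2)|\zeta|^2+2\operatorname{Re}(\alpha\bar\beta\zeta^2)$, and integrate term by term using that $n_z$ is real. The one difference is how the rescaling is obtained: you derive $\tilde n_z=(|\alpha|^2-|\beta|^2)\,n_z$ directly from $\operatorname{Im}(\overline{\tilde\zeta_u}\tilde\zeta_v)$, which needs neither resonance nor $n_z=pp'J_F$, whereas the paper reads it off from Theorem~\ref{thm:jacobian} and the law $J_F\mapsto(|\alpha|^2-|\beta|^2)J_F$.
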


\begin{proof}
By Theorem~\ref{thm:jacobian}, $n_z=pp'J_F$, so $n_z$ acquires the factor
$|\alpha|^2-|\beta|^2$; integrating gives the first law. For the second,
$|\alpha\zeta+\beta\bar\zeta|^2
=(|\alpha|^2+|\beta|^2)|\zeta|^2+2\operatorname{Re}(\alpha\bar\beta\zeta^2)$;
multiplying by the transformed $n_z$ and integrating term by term gives the
stated combination, the second term being
$2\operatorname{Re}(\alpha\bar\beta\,M_{2,0})$ with
$M_{2,0}=\iint_D\zeta^2 n_z\,du\,dv$.
\end{proof}

\begin{corollary}[Pure rescaling for $k\ge2$]
\label{cor:pure-scaling}
For $k\ge2$ we have $(k+1)\nmid2$, so $M_{2,0}=0$ by
Theorem~\ref{thm:vanishing} and
\[
J_{\mathrm{or}}\longmapsto\big(|\alpha|^2-|\beta|^2\big)
\big(|\alpha|^2+|\beta|^2\big)J_{\mathrm{or}},
\]
a positive multiple whenever $|\alpha|>|\beta|$. For $k=1$ the cross term
survives and $J_{\mathrm{or}}$ genuinely mixes with $M_{2,0}$.
\end{corollary}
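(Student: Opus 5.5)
The corollary follows by specialising Proposition~\ref{prop:affine-law} and then checking a sign.

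First, I would confirm that $M_{2,0}$ is the moment of the untransformed resonant surface. It is computed with the original $\zeta$ and the original $n_z$, because the law in Proposition~\ref{prop:affine-law} expresses the transformed quadratic moment through data of the representative. So Theorem~\ref{thm:vanishing} applies to it directly, with $m=2$ and $\ell=0$. That theorem holds on the whole surface class~\eqref{eq:2} and does not even need resonance. It gives $M_{2,0}=0$ unless $(k+1)\mid 2$. For $k\ge2$ we have $k+1\ge3>2$, so $k+1$ cannot divide $2$, and hence $M_{2,0}=0$. Substituting this into the second law of Proposition~\ref{prop:affine-law} kills the term $2\operatorname{Re}(\alpha\bar\beta M_{2,0})$. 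What remains is exactly the stated rescaling by $(|\alpha|^2-|\beta|^2)(|\alpha|^2+|\beta|^2)$.

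Second, the positivity claim. If $|\alpha|>|\beta|$, both factors are positive. The first factor also makes the map invertible, as noted at the start of Section~\ref{sec:affine}. The product equals $|\alpha|^4-|\beta|^4>0$.

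Third, the $k=1$ remark. Here $k+1=2$ divides $2$, so the vanishing theorem is silent. I would show that $M_{2,0}$ is generically nonzero by evaluating~\eqref{eq:54} with $t=1$, $k=1$. This gives $M_{2,0}=\pi\binom21 iC\big[\tfrac12\Delta p^{4}-\tfrac{C^2}{2}\Delta p^{4}\big]=\pi iC(1-C^2)\Delta p^4$. This is nonzero whenever $C\neq0$, $C^2\neq1$ and $p(0)\neq p(1)$. Consequently $\operatorname{Re}(\alpha\bar\beta M_{2,0})$ is nonzero for suitable $\alpha,\beta$, so the cross term genuinely survives and $J_{\mathrm{or}}$ mixes with $M_{2,0}$.

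There is no real obstacle here. The only point needing care is the bookkeeping above: in the transformation law, $M_{2,0}$ refers to the untransformed surface, where Theorem~\ref{thm:vanishing} applies.
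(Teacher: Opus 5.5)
Your proposal is correct and follows the paper's own argument: you specialise the law of Proposition~\ref{prop:affine-law} using $M_{2,0}=0$ from Theorem~\ref{thm:vanishing} (here $m-\ell=2$ and $k+1\ge3$), and then observe that the factor $|\alpha|^4-|\beta|^4$ is positive. Your explicit evaluation $M_{2,0}=\pi iC(1-C^2)\Delta p^4$ for $k=1$ goes beyond the paper, which only asserts the mixing and supports it numerically; because this value is purely imaginary, the cross term equals $-2\pi C(1-C^2)\Delta p^4\,\operatorname{Im}(\alpha\bar\beta)$, so under your nondegeneracy conditions your ``suitable $\alpha,\beta$'' means exactly $\operatorname{Im}(\alpha\bar\beta)\neq0$.
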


\begin{corollary}[$C^*(k)$ is an orbit invariant]
\label{cor:critical-invariant}
For $k\ge2$ the vanishing locus of $J_{\mathrm{or}}$ is unchanged along the
orbit, so the critical value $C^*(k)$ of Theorem~\ref{thm:critical} labels
the whole affine orbit and not merely its resonant representative.
\end{corollary}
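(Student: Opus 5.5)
The corollary follows from Corollary~\ref{cor:pure-scaling}. I restrict to the orbit elements that are invertible, i.e.\ $|\alpha|^2\neq|\beta|^2$, since only these are admissible images in the first place. Fix $k\ge2$ and a resonant representative with parameter $C$; by Theorem~\ref{thm:vanishing}, $M_{2,0}=0$ because $(k+1)\nmid 2$. Proposition~\ref{prop:affine-law} then gives
\[
\widetilde J_{\mathrm{or}}=\big(|\alpha|^2-|\beta|^2\big)\big(|\alpha|^2+|\beta|^2\big)J_{\mathrm{or}}.
\]
For invertible maps the first factor is nonzero, and the second factor is positive except at $\alpha=\beta=0$, which is excluded. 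So the prefactor is a nonzero constant depending only on $(\alpha,\beta)$ and not on $C$, and therefore $\widetilde J_{\mathrm{or}}(C)=0$ if and only if $J_{\mathrm{or}}(C)=0$.

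Applying Theorem~\ref{thm:critical} in the normalized case $p(u)=u$, the zero set in $C$ of the transformed moment is again $|C|=C^*(k)$. Here the orbit is parametrized by the pair (representative parameter $C$, group element $(\alpha,\beta)$). Moreover $C^*$ is reached at the same $C$ for every group element, so it is a function on orbits.

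The only point that needs care is that $M_{2,0}$ vanishes for the resonant representative at every $C$, not only at $C^*$. This holds because the vanishing in Theorem~\ref{thm:vanishing} comes from the $(k+1)$-fold symmetry and needs no hypothesis on $C$. I would also say explicitly that orientation-reversing elements ($|\alpha|<|\beta|$) flip the sign of $J_{\mathrm{or}}$ but do not move its zero set. That is why the claim concerns the vanishing locus rather than the sign.
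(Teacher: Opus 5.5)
Your proposal is correct and is essentially the paper's own route: the corollary is read off from Corollary~\ref{cor:pure-scaling}, since $M_{2,0}=0$ for $k\ge2$ makes the transformed $J_{\mathrm{or}}$ equal to $J_{\mathrm{or}}$ times the $C$-independent nonzero factor $(|\alpha|^2-|\beta|^2)(|\alpha|^2+|\beta|^2)$, so its zero set in $C$ is again $|C|=C^*(k)$. Your explicit restriction to invertible maps and your remark that orientation-reversing elements flip the sign without moving the zero set are correct refinements that the paper leaves implicit.
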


\begin{remark}[The breakpoints are invariants too]
\label{rem:breakpoints-invariant}
Since $|n_z|\mapsto\big||\alpha|^2-|\beta|^2\big|\,|n_z|$, a nonzero constant
multiple, the zero set of $n_z$ is unchanged. The breakpoints of
Theorem~\ref{thm:compensated} --- in particular $p^*=(k|C|)^{-1/(k-1)}$ for
monotonic profiles --- are therefore invariants of the orbit rather than
artefacts of the chosen representative. The value of $J_n^{\pm}$ does
\emph{not} merely rescale, however: the weight $|\zeta|^{2n}$ transforms as
well. What is invariant is the zero set of $n_z$, hence the decomposition
points of the multiplicity-weighted moment.
\end{remark}

\begin{remark}[Numerical verification]
Checked by direct two-dimensional quadrature with $C=0.3$ and the
normalisation $|\alpha|^2-|\beta|^2=1$: for $k=1$ only the full law of
Proposition~\ref{prop:affine-law} reproduces the transformed value, the pure
rescaling failing as expected; for $k=2,3$ the value of $M_{2,0}$ is zero to
machine precision and Corollary~\ref{cor:pure-scaling} holds to quadrature
accuracy. For $k=2$ the sign change of $J_{\mathrm{or}}$ occurs at the same
$C\approx0.605$ before and after the transformation, as
Corollary~\ref{cor:critical-invariant} requires.
\end{remark}

\section{Related work and priority status}
\label{sec:related-work}

\textbf{Harmonic trinomials and the caustic.} Up to affine normalisation
and translation, the projections studied here are the harmonic trinomials
$z^{\,m}+c\bar z^{\,n}$ --- our moment formulas treat the case $m=1$ --- and
these are the subject of an active line of work on root counting. For a
harmonic trinomial the fundamental theorem of algebra fails, the number of
solutions of $z^{\,m}+c\bar z^{\,n}=1$ depending on $c$, and it jumps
exactly when the target crosses the caustic; the motivation is ultimately
physical, the number of images in gravitational lensing being the root count
of the same equation. Locating the caustic is therefore a prerequisite in
that literature, and it has been done: the image of the critical circle is
identified in~\cite[Lemma 2.4]{Brilleslyper2020}, and, for complex $c$,
in~\cite[Lemma 3.4]{Brooks2022}. We claim no novelty for that
identification. The critical radius $p^{*}$ of
Theorem~\ref{thm:compensated} is the same curve written in the notation of
the present paper. The corresponding zeroth-order area counted with multiplicity has likewise
been computed on self-intersecting caustics, where it serves as an
``over-covering'' statistic in gravitational
lensing~\cite[\S4]{An2007}. The same family continues to
attract attention in the rational case~\cite{GaoGaoLiu2025}. Two cautions
apply to any comparison of formulas: the normalisation $z^m+c\bar z^n-1$
translates the caustic, so moments taken about the origin must be
transformed before they are compared, and when $\gcd(m,n)=d>1$ the full
parameter range traverses the curve $d$ times.

The second caution has a geometric consequence. For $m\ne n$ and
$C\ne0$, let $r_{*}=(m/(n|C|))^{1/(n-m)}>0$ be the general critical
radius (reducing to $p^{*}$ of Theorem~\ref{thm:compensated} when
$m=1$, $n=k\ge2$), and put $d=\gcd(m,n)$.
Since $m r_{*}^{m}=n|C|r_{*}^{n}$, the two terms in $\zeta_v$ have equal
modulus there, and the cusp equation $\zeta_v=0$ reduces to
$e^{i(m+n)v}=iC/|C|$, with exactly $m+n$ solutions in $[0,2\pi)$.
These stationary points are ordinary cusps, since at each of them
$\operatorname{Im}(\overline{\zeta_{vv}}\zeta_{vvv})
=m^{2}(m+n)^{2}(m-n)r_{*}^{2m}\ne0$, consistently with the
hypocycloid description in~\cite[Lemma 2.4]{Brilleslyper2020}
and~\cite[Lemma 3.4]{Brooks2022}.
At these parameter values, $\zeta=(1+m/n)r_{*}^{m}e^{imv}$; successive
images therefore differ by the phase $e^{2\pi i m/(m+n)}$, whose order is
$N=(m+n)/d$. Thus there are exactly $N$ distinct cusps. Equivalently,
the caustic parameterisation has fundamental period $2\pi/d$, so the full
parameter range visits each cusp $d$ times. The shift
$v\mapsto v+2\pi/(m+n)$ rotates the entire caustic by the same phase,
making $N$ its rotational symmetry order. For $m=1$, $n=k\ge2$, this
integer is $k+1$, the modulus in the selection rule of
Theorem~\ref{thm:vanishing}: cusp count and moment selection reflect the
same cyclic symmetry. The coprimality assumption in the two cited
root-counting studies~\cite{Brilleslyper2020,Brooks2022} makes $d=1$,
so their caustics have $m+n$ distinct cusps.

\textbf{Signed versus unsigned moments.} It should not be claimed that
higher moments of a multiplicity distribution are unexplored. Polynomial
moments of the \emph{signed} winding-number function are known and arise
naturally in the signature theory of planar paths;
\cite[Lemma 20]{BoedihardjoNiQian2014} gives them for every polynomial
weight, and since $|z|^{2n}=(x^2+y^2)^n$ expands into monomials, our
degree-weighted $J_n$ falls under that statement. The same applies on the
analytic side, where non-univalent moment formulas are classical; there,
however, $J_f=|f'|^2\ge0$, so no sign change occurs and the distinction
that drives this paper does not arise. To the best of our knowledge, what
has not been obtained is a closed formula for the higher
\emph{multiplicity-weighted} moments across a sense-changing critical
circle. In the present family these are shown to be total variations of the
degree-weighted primitives, so that all orders share a single set of
breakpoints, determined by the zeros of the oriented Jacobian factor alone
and independent of the polynomial weight.

\textbf{The classification claim.} Theorem~\ref{thm:classification} states
that among planar harmonic mappings $F=\Phi+\overline{\Psi}$ with
$\Phi,\Psi$ analytic, exactly two nondegenerate families have a radial
Jacobian: the two-monomial family of Definition~\ref{def:two-monomial} and
its affine orbit, every remaining admissible pair collapsing the projection
onto a line. We place this statement next to the two nearest bodies of
work.

\textbf{Planar harmonic mappings and the dilatation.} The natural index
for a statement of this kind is not the Jacobian but the dilatation
$\omega=\Psi'/\Phi'$, in terms of which classes of harmonic mappings are
customarily organised~\cite{duren2004}. By
Remark~\ref{rem:dilatation}, our branch (i) is exactly the
\emph{monomial-dilatation} case $\omega=e^{i\theta}w^{\,n-m}$, a standard
normalisation in the literature on harmonic univalent functions and their
convolutions~\cite{HengartnerSchober1987,AbuMuhannaSchober1987}; branch (ii)
has $\omega$ a M\"obius transform of a monomial,
and is the case not reachable by any condition imposed on $\Phi$ or on
$\Psi$ separately.

\textbf{Quadrature domains and moment rigidity.} Classical
quadrature-domain theory studies planar (or, via harmonic extension,
higher-dimensional) domains that are uniquely determined by finitely many
moments~\cite{GustafssonShapiro2005}. Within this theory, Gustafsson and
Putinar's exponential-orthogonal-polynomial framework assigns to a domain
the moments $a_{jk}=\tfrac1\pi\iint z^j\bar z^k g\,dA$, and their rigidity
theorems state precisely which finite moment data forces the domain into a
fixed, low-dimensional family -- the classical result being that a
three-term relation among the associated orthogonal polynomials forces an
ellipse~\cite{GustafssonPutinar2020}. Further developments within the same
programme -- the string equation satisfied by polynomial conformal maps, and
the quadrature identities available on quadric domains -- are treated
in~\cite{Gustafsson2018,Gustafsson2023}. This is the closest
classification-type statement in spirit: a condition on finitely many
coefficients whose solution set is a short list of canonical families.

\textbf{Why this is not the same statement.} Three structural differences
keep our classification distinct from the shade-function rigidity
theorems. \emph{First}, the classified object differs: the
Gustafsson--Putinar rigidity constrains static planar domains via
holomorphic/antiholomorphic mixed moments, while
Theorem~\ref{thm:classification} constrains a \emph{pair} of analytic
functions through a relation among the coefficients of their derivatives.
\emph{Second}, the mechanism differs: our condition
$\alpha_j\bar\alpha_\ell=\beta_j\bar\beta_\ell$ is a Gram-type relation
resolved by a support argument, not a linear-algebraic consequence of
vanishing moments. \emph{Third}, the outcome differs: the rigidity
theorems single out conic sections as extremal domains, whereas our
classification yields two families with free exponents over an arbitrary
admissible base profile $p(u)$, plus a degenerate branch.

\textbf{Affine and linear invariant families.} The invariant-family language
used here is standard in the planar harmonic setting; see the systematic
treatment of affine and linear invariant families of harmonic mappings
in~\cite{ChuaquiHernandezMartin2017}, whose notion of an affine change
coincides with the action used in Section~\ref{sec:affine}.

\textbf{The Jacobian-preserving group.} The transformations that leave $J_F$
invariant are known and completely classified: Constantin and
Mart\'in~\cite{ConstantinMartin2017} show that two harmonic mappings share a
Jacobian precisely when their canonical pairs differ by the affine matrix of
Proposition~\ref{prop:affine-orbit} together with a dilatation rotation. Our
branch (ii) is an orbit of that action, and we claim no novelty for it.

\textbf{Status of the classification claim.} What remains after subtracting
the known is narrow, and we state it narrowly. Branch (i) is, in dilatation
form, the classical monomial-dilatation case, so our contribution there is
one of identification rather than discovery; the group acting in branch (ii)
is likewise classical. Branch (iii) is classical as well: that a planar
harmonic mapping has identically vanishing Jacobian precisely when its
analytic and co-analytic parts are related by a unimodular constant, and
that its image then lies in a line, a segment or a point, is Lemma 2.1 of
Lyzzaik~\cite{Lyzzaik1992}, stated there for general harmonic mappings with
no lightness hypothesis in force. What we add in branch (iii) is only its
\emph{position} in the classification: it is exactly the residual case of
the radiality condition~\eqref{eq:28} on a shared support with $|S|\ge2$. We have not
located a prior statement that the orbit of the two-monomial family
\emph{exhausts} the pairs with radial Jacobian, and it is that exhaustion,
together with the normal form of Corollary~\ref{cor:normal-form}, that we
regard as the contribution of this section. The closest general Jacobian
references known to us
are~\cite{GrafNikitin2023,ConstantinMartin2017,Nikitin2025}; we claim no
novelty for the existence of the orbit itself. We note
that~\cite{ConstantinMartin2017,Nikitin2025} operate under a standing
hypothesis $J>0$, so their results do not by themselves cover the
sign-changing Jacobian of branch (ii).

\section{Open questions}
\label{sec:open}

\begin{itemize}
\item \textbf{The surviving complex-weighted coefficients.}
Theorem~\ref{thm:vanishing} gives the closed form only for $\ell=0$. The
general formula for arbitrary surviving $(m,\ell)$ is a longer but
straightforward extension of the same argument, which we have not carried
out.
\item \textbf{Multiplicity-weighted moments on the shared-support branch.}
Branch~(ii) of Theorem~\ref{thm:classification} is an affine copy of the
two-monomial family, but the derivations of
Sections~\ref{sec:higher}--\ref{sec:complex-weighted} use the two-monomial
form of $n_z$ throughout. What replaces the critical value $C^*$ there is
untouched.
\item \textbf{The annular case.} $\Phi$ and $\Psi$ have been restricted to
analytic series in $w$, matching the disk-type domain $w=p(u)e^{iv}$ with
$p\ge0$. Admitting genuine Laurent series, on an annulus with $p$ bounded
away from $0$, is a distinct question.
\item \textbf{Relation to quadrature-domain moment
theory}~\cite{GustafssonTkachev2009}: the algebraic relation between
classical harmonic moments, which use a holomorphic weight and measure, and
the degree- and multiplicity-weighted moments used here.
\item \textbf{Global univalence in space:} when is
$\mathbf r:D\to\mathbb R^3$ injective? Local univalence of the projection
does not decide this.
\end{itemize}

\appendix
\section{Numerical validation}

\subsection{Verification on multiple profiles}

The validity of formula~\eqref{eq:34} was checked using Simpson's rule 2D
quadrature ($N_u=401$, $N_v=601$ grid) on several profiles $p(u)$. The
numerical code uses the resonant form $f(u)=C\cdot p(u)^k$ (not $Cu^k$),
so all $p$ are tested with the general resonant case. For reproducibility,
the parameterization and quadrature choice are made explicit; the Python
(NumPy + SciPy) implementation is based on \texttt{scipy.integrate.simpson}
and \texttt{scipy.integrate.quad}. This verification code was written with
the assistance of generative-AI tools, as declared at the end of the paper;
the quadrature scheme, the quantities computed and the agreement with the
analytic formulas were specified and checked by the author, and every
reported value was reproduced by a second, independent quadrature method.

For reproducibility, this appendix summarizes the computation procedure.

Formula~\eqref{eq:34} was verified using the following two-dimensional integral.
Given $p\in C^1([0,1])$, $k\in\mathbb Z^+$ and $C\in\mathbb R$:
\begin{align}
x(u,v)&=p(u)\cos v+Cp(u)^k\sin(kv), \label{eq:55}\\
y(u,v)&=p(u)\sin v+Cp(u)^k\cos(kv), \label{eq:56}\\
n_z(u)&=p(u)p'(u)\Big(1-k^2C^2p(u)^{2k-2}\Big), \label{eq:57}
\end{align}
where the last form is $v$-independent in the resonant case $f=Cp^k$.

We then computed:
\begin{equation}
J_{\mathrm{or}}^{\mathrm{num}}=\int_0^1\int_0^{2\pi}
\big(x(u,v)^2+y(u,v)^2\big)\cdot n_z(u)\,dv\,du \label{eq:58}
\end{equation}
\begin{sloppypar}
using Simpson's rule 2D quadrature ($N_u=401$, $N_v=601$, with
\texttt{scipy.integrate.\allowbreak simpson}). In the nonzero cases the
results were also verified using \texttt{scipy.integrate.\allowbreak quad}
adaptive quadrature on the $v$-averaged one-dimensional integral
\end{sloppypar}
\begin{equation}
J_{\mathrm{or}}=2\pi\int_0^1(p^2+C^2p^{2k})\cdot pp'(1-k^2C^2p^{2k-2})\,du,
\label{eq:59}
\end{equation}
which follows from the vanishing of the $v$-average of the
$\sin((k+1)v)$ term.

The two independent computations agree with each other and with the
analytic formula~\eqref{eq:34}: the Simpson grid reproduces the analytic value to a
relative error below $10^{-4}$, and the adaptive quadrature to near machine
precision.

\section*{Statements and Declarations}

\textbf{Funding.} No funding was received for conducting this study.

\textbf{Competing interests.} The author declares that he has no competing
interests.

\textbf{Data availability.} No datasets were generated or analysed in this
study. The numerical verification procedures are described in full in the
Appendix; the corresponding Python code is available from the author on
reasonable request.

\section*{Declaration of generative AI and AI-assisted technologies in the
manuscript preparation process}

During the preparation of this work the author used Anthropic Claude and
OpenAI ChatGPT in order to assist with literature searching and synthesis,
the organisation and exposition of the manuscript, language editing, the
checking of references and submission requirements, and the writing of the
numerical verification code described in the Appendix. All mathematical
content --- the surface class, the resonance condition, the Jacobian
factorization, the classification, and the moment formulas together with
their proofs --- originates with the author. The author reviewed and edited
all AI-assisted output, verified every statement and every cited source
independently, and takes full responsibility for the content and
originality of the publication.

\end{document}